\documentclass{amsart}
\usepackage{amssymb
,amsthm
,amsmath
,amscd
,mathtools,hyperref
}
\usepackage{enumerate}
\usepackage{hyperref}
\usepackage[all]{xy}
\usepackage[top=30truemm,bottom=30truemm,left=25truemm,right=25truemm]{geometry}

\newtheorem{thm}{Theorem}[section]
\newtheorem{lem}[thm]{Lemma}
\newtheorem{prop}[thm]{Proposition}
\newtheorem{coro}[thm]{Corollary}
\newtheorem{conj}[thm]{Conjecture}

\theoremstyle{remark}

\newtheorem{rem}[thm]{Remark}

\numberwithin{equation}{section}

\newcommand{\Sp}{\mathrm{Sp}}
\newcommand{\SL}{\mathrm{SL}}
\newcommand{\GL}{\mathrm{GL}}
\newcommand{\Mp}{\mathrm{Mp}}
\newcommand{\Oo}{\mathrm{O}}
\newcommand{\SO}{\mathrm{SO}}
\newcommand{\PGL}{\mathrm{PGL}}
\newcommand{\GSp}{\mathrm{GSp}}
\newcommand{\GSpin}{\mathrm{GSpin}}
\newcommand{\PGSp}{\mathrm{PGSp}}
\newcommand{\PGSO}{\mathrm{PGSO}}
\newcommand{\PGU}{\mathrm{PGU}}

\newcommand{\Hom}{\mathrm{Hom}}
\newcommand{\Gal}{\mathrm{Gal}}
\newcommand{\Res}{\mathrm{Res}}
\newcommand{\Irr}{\mathrm{Irr}}

\makeatletter
\def\iddots{\mathinner{\mkern1mu\raise\p@
	\hbox{.}\mkern2mu\raise4\p@\hbox{.}\mkern2mu
	\raise7\p@\vbox{\kern7\p@\hbox{.}}\mkern1mu}}
\def\adots{\mathinner{\mkern2mu\raise\p@\hbox{.}
 \mkern2mu\raise4\p@\hbox{.}\mkern1mu
 \raise7\p@\vbox{\kern7\p@\hbox{.}}\mkern1mu}}
\makeatother

\allowdisplaybreaks
\title{ The distinction problem for  metaplectic case }
\author{Hengfei Lu}
\address{Department of Mathematics, Weizmann Institute of Science, 234 Herzl St., P.O. B. 26, Rehovot 7610001, ISRAEL }
\email{hengfei.lu@weizmann.ac.il}
\setcounter{tocdepth}{1}

\begin{document}

\begin{abstract}
We use  the  theta lifts between $\Mp_2$ and $PD^\times$   to study the  distinction problems for the pair $(\Mp_2(E), \SL_2(F)),$ where $E$ is a quadratic field extension over a nonarchimedean local field $F$ of characteristic zero and $D$ is a quaternion algebra. With a similar strategy, we give a conjectural formula for the multiplicity of distinction problem related to the pair $(\Mp_{2n}(E),\Sp_{2n}(F) ).$
\end{abstract}
\subjclass[2010]{11F27.22E50}
\keywords{Theta lift, distinction problem,  metaplectic cover }

\maketitle
\tableofcontents
\section{Introduction}
The distinction problems have been extensively studied for classical groups such as \cite{flicker1991ondist,gan2010shalika,lapid2012unitary,matringe2009distinction,sakellaridis2012periods,prasad2015arelative}. However, very little is known for the distinction problems for covering groups in the literature. This paper focuses on the distinction problems related to the pair $(\Mp_{2n}(E),\Sp_{2n}(F))$, where $\Mp_{2n}(E)$ is the nontrivial two-fold metaplectic cover of $\Sp_{2n}(E)$ and $E/F$ is a quadratic  extension of nonarchimedean local fields. 

 Let $F$ be a finite field extension of $\mathbb{Q}_p.$ Let $W_F$ be its Weil group and $WD_F$ be the Weil-Deligne group.
Let $E=F[\delta]$ be a quadratic extension of  $F$ with Galois group $\Gal(E/F)=\langle\sigma\rangle$, where $\delta^2\in F^\times.$
Let $G$ be a quasi-split reductive group defined over $F$ with the Langlands dual group $\hat{G}$.
Let $\Irr(G(F))$ denote the set of the smooth irreducible admissible representation of $G(F)$, up to isomorphisms. Given a representation $\pi\in \Irr(G(E))$ and a character $\chi$ of $G(F)$, if $\Hom_{G(F)}(\pi,\chi )\neq0 $, then $\pi$ is said to be $(G(F),\chi)$-distinguished. If $\chi$ is a trivial character, then $\pi$ is called a $G(F)$-distinguished representation. Moreover, Dipendra Prasad \cite[\S16]{prasad2015arelative} has a precise conjecture regarding to the multiplicity
\[\dim\Hom_{G(F)}(\pi,\chi_G ) \]
where $\chi_G$ is a quadratic character defined in \cite[\S10]{prasad2015arelative} depending on the reductive group $G$ and the quadratic field extension $E/F$.

It turns out that the disctinction problems for the pair $(\Mp_{2n}(E),\Sp_{2n}(F))$ are related to the Prasad conjecture for the general spin group $\GSpin_{2n+1}$. 
(See \S\ref{subsect.prasad} for more details.)

 Let $(W_n,\langle-,-\rangle)$ be a $2n$-dimensional symplectic space over $F$ with associated symplectic group $\Sp(W_n)=\Sp_{2n}(F)$.
Set $\Mp(W_n)=\Mp_{2n}(F)$ to be the unique nontrivial two-fold metaplectic cover of $\Sp_{2n}(F)$ with multiplication
\[(g_1,\epsilon_1)(g_2,\epsilon_2)=(g_1g_2,\epsilon_1\epsilon_2 c_{Rao}(g_1,g_2)) \]
where $g_i\in \Sp_{2n}(F),\epsilon_i\in\mu_2$ and $c_{Rao}(g_1,g_2)$ is Rao-cocycle. (See \cite[Theorem I.4.5]{kudla1996notes}. )
\par
Let $W_n\otimes_FE=W_{n,E}$ be the $2n$-dimensional symplectic vector space over $E$ with symplectic group $\Sp_{2n}(E)$. So $W'=\Res_{E/F}(W_{n,E})$ with symplectic form $\frac{1}{2}tr_{E/F}\circ \langle-,-\rangle_E$ is a $4n$-dimensional symplectic space over $F.$
 There is a natural group embedding $$i:\Sp_{2n}(E)=\Sp(W_{n,E})\hookrightarrow \Sp(W')=\Sp_{4n}(F)$$ and the preimage of $ i(\Sp_{2n}(E))$ in $\Mp_{4n}(F)$  is isomorphic to the two-fold metaplectic cover $\Mp_{2n}(E)$ of $\Sp_{2n}(E).$ 
There is a commutative diagram
\[\xymatrix{1\ar[r]&\mu_2\ar@{=}[d]\ar[r]&\Mp_{4n}(F)\ar[r]& \Sp_{4n}(F)\ar[r]&1\\1\ar[r] &\mu_2\ar[r]& \Mp_{2n}(E)\ar[u]^i\ar[r]& \Sp_{2n}(E)\ar[r]\ar[u]^i&1 } \]
and there exists a splitting $\Sp_{2n}(F)\hookrightarrow \Mp_{2n}(E) $ due to the group embedding $\Sp(W_n)\hookrightarrow \Sp(W_{n,E})$ (see \S\ref{sect.splitting}).
\par
Given a genuine representation $\tau$ of $\Mp_{2n}(E)$, i.e. $$\tau(\epsilon \tilde{g})=\epsilon\cdot\tau(\tilde{g})\mbox{  for  }  \epsilon\in \mu_2 \mbox{  and  } \tilde{g}\in \Mp_{2n}(E),$$ with central character $\omega_\tau$ satisfying $\omega_\tau(-1)=1,$ where $-1$ means $(-1,1)\in\Mp_{2n}(E)$,  we will consider the metaplectic distinction problem for the pair $(\Mp_{2n}(E),\Sp_{2n}(F)),$ i.e. to determine the multiplicity
\[\dim \Hom_{\Sp_{2n}(F)}(\tau,\mathbb{C} ). \]
In this paper, we will mainly use theta correspondence to deal with such a kind of distinction problem.
\par
Fix a nontrivial additive character $\psi$ of $F.$   Due to Waldspurger's results \cite{waldspurger1991correspondances}, there is a bijection 
\[\xymatrix{\Irr(\Mp_{2}(F))\ar[r]&\Irr(\PGL_2(F))\sqcup \Irr(PD^\times)\ar[l]  }
 \]
where $D$ is the unique quaternion division algebra over $F$ and $\Irr(\Mp_2(F))$
is the set of irreducible genuine smooth representations of $\Mp_2(F)$.
  Gan-Savin established a bijection for higher dimension in \cite{gan2012metaplectic}.
\begin{thm}
	[Gan-Savin] There is a bijection
	\[\theta_\psi:\Irr(\Mp(W_{n}) )\longrightarrow \Irr(\SO(V_{2n+1}^+))\sqcup \Irr(\SO(V_{2n+1}^-)) ,\]
	where $V_{2n+1}^+$ (respectively $V_{2n+1}^-$) is the split (resp. non-split) quadratic space with trivial discriminant and dimension $2n+1$ over $F.$ This bijection is given by the local theta correspondence for the group $\Mp(W_n)\times\SO(V_{2n+1}^\pm),$ depending on $\psi.$ Moreover, the representation $\theta_{\psi}(\tau)$ is tempered (resp. square-integrable) if and only if $\tau\in \Irr(\Mp(W_n))$ is tempered (resp. square-integrable).\label{1}
\end{thm}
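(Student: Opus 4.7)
The plan is to construct $\theta_\psi$ via the local theta correspondence for the two reductive dual pairs $(\Mp(W_n),\Oo(V_{2n+1}^\pm))$ inside an ambient metaplectic group, and to verify bijectivity by combining Howe duality with a dichotomy statement. Since $\dim V_{2n+1}^\pm$ is odd, the element $-I\in\Oo(V_{2n+1}^\pm)$ is central and $\Oo(V_{2n+1}^\pm)\cong\SO(V_{2n+1}^\pm)\times\{\pm I\}$, so $\Irr(\Oo(V_{2n+1}^\pm))$ consists of two copies of $\Irr(\SO(V_{2n+1}^\pm))$ indexed by the central sign of $-I$. Fixing one sign convention reduces the target to a single copy on each orthogonal space, and it suffices to build a bijection from $\Irr(\Mp(W_n))$ onto the disjoint union of these two copies, which then identifies with $\Irr(\SO(V_{2n+1}^+))\sqcup\Irr(\SO(V_{2n+1}^-))$.

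First I would invoke the Howe duality for the metaplectic--orthogonal pair (Waldspurger in odd residue characteristic, Gan--Takeda in general) to conclude that each nonzero theta lift $\theta_\psi^\epsilon(\tau)$ is irreducible and that distinct $\tau$'s give distinct lifts; this makes $\theta_\psi$ well defined and injective on each component once the dichotomy below is established. The crucial step is to prove that for every $\tau\in\Irr(\Mp(W_n))$ exactly one of $\theta_\psi^+(\tau), \theta_\psi^-(\tau)$ is nonzero. I would derive this from the conservation relation for the $(\Mp,\Oo)$-tower, in the spirit of Sun--Zhu and Kudla--Rallis: the first occurrence indices of $\tau$ in the two orthogonal towers satisfy an explicit additive identity which, together with the stable-range lower bound and the equal-rank level $2n+1$, forces the first occurrence to happen at $\dim V=2n+1$ in exactly one tower and at $\dim V\ge 2n+3$ in the other. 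This is the main obstacle, since verifying the conservation relation uniformly across residue characteristics and across the split/non-split towers requires a careful see-saw argument with the pair $(\Mp(W_n)\times\Mp(W_n),\Oo(V_{2n+1}^+\oplus V_{2n+1}^-))$ and a delicate sign computation in the associated doubling zeta integral.

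For surjectivity I would run the same machinery in reverse, replacing the orthogonal tower by the metaplectic tower $\{\Mp(W_m)\}_m$ and applying the analogous conservation relation on the symplectic side: this forces any $\sigma\in\Irr(\SO(V_{2n+1}^\epsilon))$, extended to $\Oo(V_{2n+1}^\epsilon)$ by the chosen central sign, to first occur at $\Mp(W_n)$, producing the required preimage. Finally, the preservation of temperedness and of square-integrability follows from the standard Kudla--Rallis and Mui\'{c} analysis of the asymptotics of matrix coefficients in the equal-rank theta correspondence, where the proportionality between the Plancherel measures of $\tau$ and $\theta_\psi(\tau)$ transfers the relevant integrability of one side to the other.
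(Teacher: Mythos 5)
Theorem~\ref{1} is not proved in the paper at all: it is imported verbatim from Gan--Savin \cite{gan2012metaplectic}, so there is no internal argument for you to be compared against. With that caveat, your sketch is a reasonable reconstruction of the route Gan and Savin actually take (Howe duality plus conservation for the dichotomy, and matrix-coefficient asymptotics for the temperedness/square-integrability statement), but two points are stated imprecisely. First, the conservation relation $r^+(\tau)+r^-(\tau)=4n+4$ does \emph{not} force the first occurrence to be exactly at $\dim V=2n+1$ in one tower and $\geq 2n+3$ in the other; it only forces that exactly one of $\theta^{\pm}_{2n+1}(\tau)$ is nonzero. The first occurrence in the nonzero tower may well be at some $2r+1<2n+1$, with nonvanishing at $2n+1$ then following from Kudla's persistence principle; your phrase ``forces the first occurrence to happen at $\dim V=2n+1$'' (and likewise ``to first occur at $\Mp(W_n)$'' on the surjectivity side) should be weakened accordingly. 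Second, passing between $\Oo(V^{\epsilon}_{2n+1})$ and $\SO(V^{\epsilon}_{2n+1})$ is not a matter of ``fixing one sign convention'' globally: the central sign of $-I_V$ on $\Theta_\psi(\tau)$ is dictated by the Weil representation and varies with $\tau$, while for a fixed $\sigma\in\Irr(\SO(V^{\epsilon}_{2n+1}))$ the two extensions $\sigma$ and $\sigma\otimes\det$ to $\Oo(V^{\epsilon}_{2n+1})$ genuinely behave differently (by conservation on the metaplectic tower exactly one of them lifts nontrivially to $\Mp(W_n)$), and it is that dichotomy, not a choice, that singles out the correct extension. You should also state explicitly that $\theta_\psi\circ\theta_\psi=\mathrm{id}$ on the relevant domain, which is part of what Howe duality supplies and is needed to close the surjectivity argument.
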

Fix an additive character $\psi\circ \frac{1}{2} tr_{E/F} $ of $E$, still  denoted by $\psi$. 
Suppose that  $\tau\in \Irr(\Mp_{2n}(E))$  associated with an enhanced Langlands parameter $(\phi_{\tau},\eta_\tau)$ where $$\phi_{\tau}=\sum_{i=1}^r m_i\phi_i:WD_E\longrightarrow\Sp_{2n}(\mathbb{C}),$$  $\phi_i$ are distinct irreducible representations   with multiplicity $m_i=m_{\phi_{\tau}}(\phi_i)$ in $\phi_{\tau}$ and $\eta_\tau$ is a character of  the component group $A_{\phi_{\tau}}=C_{\phi_{\tau}}/C^\circ_{\phi_{\tau}}$ where $$C_{\phi_{\tau}}=\{g\in\Sp_{2n}(\mathbb{C}):g^{-1}\phi_\tau(t)g=\phi_\tau(t) \mbox{  for all  } t\in WD_E \}$$ is the centralizer of $\phi_{\tau}$ with connected component $C^\circ_{\phi_{\tau}}$. The component group $A_{\phi_{\tau}}$
is given by 
\[A_{\phi_{\tau} }=\bigoplus_{i=1}^r \mathbb{Z}/2\mathbb{Z} \xi_i\cong(\mathbb{Z}/2\mathbb{Z} )^r. \]
We will use $-1$ to denote the sum $\sum_im_i\xi_i $ in $A_{\phi_{\tau}}$. If $\eta_\tau(-1)=1$ (resp. $-1$), then $\theta_{\psi}(\tau)$ is a nonzero representation of $\SO(V_{2n+1}^+)$ (resp. $\SO(V_{2n+1}^-)$) with Langlands parameter $\phi_{\theta_{\psi}(\tau)}= \phi_{\tau}$.
Fix $\ell\in W_F\setminus W_E$. A Langlands parameter $$\phi_{\tau}:WD_E\longrightarrow\Sp_{2n}(\mathbb{C})=\Sp(M)$$ is called \textbf{conjugate-orthogonal} if there exist a bilinear form $B$ on $M$ such that
\[\begin{cases}
B(\phi_{\tau}(t)m,\phi_{\tau}(\ell t\ell^{-1})m')=B(m,m')\\
B(m,\phi_{\tau}(\ell^2)m')=B(m',m)
\end{cases} \]
for all $m,m'\in M$ and $t\in WD_E$.
\par
Denote $\tau^\delta$ to be the representation obtained via the adjoint action of $g_\delta=\begin{pmatrix}
\delta\\&1
\end{pmatrix}$ in the similitude group $\GSp_{2n}(E)$, i.e. $\tau^\delta$ is given by
\[\tau^\delta\big((y,\epsilon)\big)=\tau\big((g_\delta^{-1}yg_\delta,\epsilon)\big) \]
for $(y,\epsilon)\in\Mp_{2n}(E)$. The enhanced $L$-parameter of $\tau^\delta$ is given in \cite[Theorem 1.5]{gan2012metaplectic}. 

 Let $\epsilon(1/2,\phi_\tau)=\epsilon(1/2,\phi_\tau,\psi )$ be the local root number defined in \cite[\S5]{gan2011symplectic}.
\begin{thm}\cite[Theorem 1.5]{gan2012metaplectic}
	Let $\tau\in \Irr(\Mp_{2n}(E))$ with an enhanced $L$-parameter $(\phi_{\tau},\eta_\tau)$. Then  the enhanced $L$-parameter  of the conjugated representation $\tau^\delta$ is given by
	\[(\phi_{\tau}\otimes\chi_{\delta},\eta' )\]  where 
 $\chi_{\delta}(e)=\langle\delta,e \rangle_E$ for $e\in E^\times$,  $\langle-,-\rangle_E$ is the Hilbert symbol 
and \[\eta'(\xi_i)/\eta_\tau(\xi_i)=\epsilon(1/2,\phi_i)\epsilon(1/2,\phi_i\otimes\chi_{\delta} )\chi_{\delta}(-1)^{\dim\phi_i/2 }\]  for  $\xi_i\in A_{\phi_{\tau}}=A_{\phi_{\tau^\delta} } .$
\end{thm}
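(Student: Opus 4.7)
The approach is to translate the conjugation $\tau \mapsto \tau^\delta$ into a change of the additive character in the Weil representation, and then to apply Theorem~\ref{1}. Since $g_\delta \in \GSp_{2n}(E)$ has similitude factor $\delta$, conjugation by $g_\delta$ rescales the symplectic form on $W_{n,E}$ by $\delta$; by the standard dictionary for the Heisenberg group, the Weil representation $\omega_\psi$ composed with $\mathrm{Ad}(g_\delta)$ is intertwined with $\omega_{\psi_\delta}$, where $\psi_\delta(x) := \psi(\delta x)$. Taking theta lifts with a fixed orthogonal space $V_{2n+1}^{\pm}$, this gives the identification of $\SO(V_{2n+1}^{\pm})$-representations
\[ \theta_\psi(\tau^\delta) \;\cong\; \theta_{\psi_\delta}(\tau). \]

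The next and central step is the lemma that replacing $\psi$ by $\psi_\delta$ in the Gan--Savin bijection twists the $L$-parameter on the $\SO_{2n+1}$-side by $\chi_\delta$: if $\theta_\psi(\tau)$ has parameter $\phi_\tau$ then $\theta_{\psi_\delta}(\tau)$ has parameter $\phi_\tau \otimes \chi_\delta$. I would prove this by first checking it on unramified principal series, where the theta correspondence is explicit via the doubling method and its dependence on $\psi$ is transparent, and then propagating to all of $\Irr(\Mp_{2n}(E))$ via compatibility of theta lifts with parabolic induction and Jacquet modules, together with the rigidity of the local Langlands correspondence for $\SO_{2n+1}$. Combined with the identification of the previous paragraph, this yields $\phi_{\tau^\delta} = \phi_\tau \otimes \chi_\delta$ and a canonical isomorphism $A_{\phi_{\tau^\delta}} = A_{\phi_\tau}$ sending $\xi_i$ to $\xi_i$.

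Finally, to compute $\eta'$, I would compare how $\theta_\psi(\tau)$ and $\theta_{\psi_\delta}(\tau)$ are labelled within their respective $\SO_{2n+1}$ Vogan packets. Both labellings use a Whittaker normalization that depends on the additive character, so the comparison factors through the relation between the $\psi$-Whittaker and $\psi_\delta$-Whittaker parametrizations of an $\SO_{2n+1}$-packet; this relation is governed, on each generator $\xi_i$, by a ratio of Langlands--Shahidi local root numbers which reduces to $\epsilon(1/2,\phi_i)\epsilon(1/2,\phi_i\otimes\chi_\delta)$ (both are signs, since $\phi_i$ is self-dual symplectic), while the additional factor $\chi_\delta(-1)^{\dim\phi_i/2}$ is the central-character correction produced by the similitude scaling of $\omega_\psi$. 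Assembling these contributions gives the stated formula. The main obstacle is this last step: the precise bookkeeping of local root numbers in the $\psi$-Whittaker parametrization of $\SO_{2n+1}$-packets, and their exact interaction with the similitude scaling on the metaplectic side, is delicate, and one should verify the final sign on an explicit test case such as an unramified or Steinberg-type principal series.
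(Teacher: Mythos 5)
This theorem is quoted verbatim from Gan--Savin (it is \cite[Theorem~1.5]{gan2012metaplectic}); the paper supplies no proof of its own, so there is nothing in the paper to compare your argument against. Judged on its own terms, your sketch starts on the right track but breaks down at the two places that matter most.

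Your opening moves are correct and standard: conjugation by $g_\delta\in\GSp_{2n}(E)$ rescales the symplectic form by $\delta$, which by Stone--von Neumann intertwines $\omega_\psi\circ\mathrm{Ad}(g_\delta)$ with $\omega_{\psi_\delta}$, whence $\theta_\psi(\tau^\delta)\cong\theta_{\psi_\delta}(\tau)$. This is exactly the dictionary Gan--Savin use to pass between ``conjugation by $\GSp_{2n}$'' and ``dependence on $\psi$''. But the ``central lemma'' you then invoke --- that replacing $\psi$ by $\psi_\delta$ twists the parameter of the theta lift by $\chi_\delta$ --- is not a smaller ingredient; it is essentially a restatement of the theorem. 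Worse, the proposed verification (check on unramified principal series, propagate by parabolic induction and Jacquet functors) cannot reach supercuspidal $\tau$, since those never occur as subquotients of proper parabolic inductions. Gan--Savin's actual argument for the supercuspidal case is global, using Arthur's multiplicity formula and Waldspurger-type local-global compatibility; there is no purely local inductive shortcut of the kind you describe.

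The final step has a genuine conceptual error. You frame the $\eta'/\eta_\tau$ ratio as a comparison between a ``$\psi$-Whittaker'' and a ``$\psi_\delta$-Whittaker'' parametrization of the $\SO_{2n+1}$-packet. But $\SO_{2n+1}$ is adjoint, so it has a single $T(F)$-conjugacy class of $F$-rational Whittaker data; its Vogan parametrization is canonical and does not depend on an additive character at all. There is no freedom on the $\SO$ side to reshuffle. The dependence on $\psi$ lives entirely on the metaplectic side, built into the theta lift $\theta_\psi$ itself, and the formula
\[
\eta'(\xi_i)/\eta_\tau(\xi_i)=\epsilon(1/2,\phi_i)\,\epsilon(1/2,\phi_i\otimes\chi_\delta)\,\chi_\delta(-1)^{\dim\phi_i/2}
\]
is an output of Gan--Savin's $\epsilon$-dichotomy theorem and its compatibility with first occurrence in the two orthogonal Witt towers, not a Whittaker-renormalization identity for $\SO_{2n+1}$. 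You would need to run the comparison through the dichotomy sign $\prod_i\epsilon(1/2,\phi_i)^{a_i}$ that governs on which tower $\theta_\psi$ and $\theta_{\psi_\delta}$ are nonvanishing, together with the explicit effect of $\psi\mapsto\psi_\delta$ on the Weil index (which is the source of the $\chi_\delta(-1)^{\dim\phi_i/2}$ factor). As written, your last paragraph is attributing the sign to the wrong mechanism.
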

 Using the see-saw identity and Mackey Theory, we have the following results:
\begin{thm} Assume that $\tau\in \Irr(\Mp_{2}(E))$  with an $L$-parameter $(\phi_\tau,\eta_\tau).$\label{mainthm}
	\begin{enumerate}[(i)]
		\item 	If $\tau$ is an irreducible square-integrable representation  of $\Mp_2(E),$ then
		\[\dim \Hom_{\SL_2(F) }(\tau,\mathbb{C} )=2\cdot\dim \Hom_{\PGL_2(F)}(\pi,\mathbb{C} ), \]
		where $\pi=\theta_{\psi}(\tau^\delta)$ is a representation of $\PGL_2(E)$. 
		Thus $\tau$ is $\SL_2(F)$-distinguished if and only if the Langlands parameter $\phi_{\tau}\otimes\chi_{\delta}$  is conjugate-orthogonal and $$\eta_\tau(-1)=\epsilon(1/2,\phi_{\tau})\epsilon(1/2,\phi_{\tau}\otimes\chi_{\delta})(-1,\delta)_E.$$
		\item If $\tau =\pi_{\psi}(\mu)$ with $\mu^2\neq|-|^{\pm1}$ is an irreducible principal series representation of $\Mp_2(E),$ then
			$$\dim\Hom_{\SL_2(F)}(\pi_{\psi}(\mu\cdot\chi_{\delta}),\mathbb{C})=\begin{cases}
			2,&\mbox{if }\mu|_{E^1}=\mathbf{1},\\
			1,&\mbox{if }\mu|_{F^\times}=\mathbf{1}\mbox{ and }\mu^2\neq\mathbf{1},\\
			0,& \mbox{other cases,}
			\end{cases}$$
		where $E^1=\{e\in E^\times:e\sigma(e)=1 \}$ and $\pi_{\psi}(\mu)$ is defined in \S $3$.
		\item If $\tau$ is the even or odd Weil representation $\omega_{\psi,\chi_a}^\pm$ of $\Mp_2(E),$
		where $\chi_a(e)=(e,a)_E$ for $e\in E^\times$ is the quadratic character associated to $a\in E^\times/(E^\times)^2,$ then $\Hom_{\SL_2(F)}(\omega_{\psi,\chi_a}^-,\mathbb{C} )=0.$
	 Moreover, 
		$$\dim \Hom_{\SL_2(F)}(\omega_{\psi,\chi_a}^+,\mathbb{C} )=\begin{cases}
 2,&\mbox{if }	
 a\in \delta E^1\setminus (\delta E^1\cap (E^\times)^2);\\
0,&\mbox{otherwise }.
		\end{cases}  $$
	\end{enumerate}
\end{thm}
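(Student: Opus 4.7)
The plan is to prove all three parts within a common framework: transport the distinction question on $\Mp_2(E)$ through the Gan-Savin bijection of Theorem \ref{1} to a question on $\SO(V_3^\pm)(E)\cong \PGL_2(E)$ or $PD^\times(E)$, and then invoke classical results on $\GL_2$-distinction due to Flicker and Prasad. The key mechanism will be the see-saw diagram
\[
\begin{array}{ccc}
\Mp_2(E) & \quad & \Oo(V_3^\pm)(E) \\
\cup & & \cup \\
\Sp_2(F) & \quad & \Oo(V_3^\pm)(F),
\end{array}
\]
viewed inside the symplectic $F$-space $(W\otimes V_3^\pm)\otimes_F E$ endowed with the form $\tfrac{1}{2}\mathrm{tr}_{E/F}$; the vertical inclusion on the left will use the splitting $\Sp_2(F)\hookrightarrow \Mp_2(E)$ of \S\ref{sect.splitting}.

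For Part (i), I will apply the see-saw identity with the trivial character of $\Oo(V_3^\pm)(F)$ to obtain
\[
\Hom_{\Sp_2(F)}\bigl(\tau,\, \Theta_\psi(\mathbb{C})\bigr) \;\cong\; \Hom_{\Oo(V_3^\pm)(F)}\bigl(\Theta_\psi(\tau),\, \mathbb{C}\bigr).
\]
The left-hand $\Theta_\psi(\mathbb{C})$ will be identified as two copies of the trivial $\Sp_2(F)$-module, coming from the two components of $\Oo(V_3^\pm)(F) = \SO(V_3^\pm)(F)\cdot\{\pm 1\}$; this accounts for the factor $2$. The mismatch between the additive character $\psi$ on $F$ and the pulled-back character $\psi\circ\tfrac{1}{2}\mathrm{tr}_{E/F}$ on $E$ will introduce a twist by $\chi_\delta$, which by the Gan-Savin $L$-parameter formula \cite[Theorem 1.5]{gan2012metaplectic} replaces $\tau$ by $\tau^\delta$. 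Taking $V_3^+$ split then yields $\pi=\theta_\psi(\tau^\delta)\in\Irr(\PGL_2(E))$, and Flicker's criterion for $\PGL_2(F)$-distinguished representations of $\PGL_2(E)$ will convert the right-hand $\Hom$ into the conjugate-orthogonality of $\phi_\tau\otimes\chi_\delta$ together with the stated $\epsilon$-factor identity for $\eta_\tau(-1)$.

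For Part (ii), I will realise $\pi_\psi(\mu)$ as the genuine principal series $\mathrm{Ind}_{\widetilde{B}(E)}^{\Mp_2(E)}(\widetilde{\mu})$, restrict to $\Sp_2(F)$, and apply Mackey theory to the double-coset space $B(E)\backslash \Sp_2(E)/\Sp_2(F)$. Each orbit will contribute a term computed by Frobenius reciprocity on its stabiliser (a torus or a Borel-intersection), and the trichotomy will emerge according to whether the twisted character is trivial on $E^1$ (open orbit, multiplicity $2$) or on $F^\times$ (a closed orbit, multiplicity $1$). For Part (iii), I will realise $\omega_{\psi,\chi_a}^\pm$ in its Schr\"odinger model on $\mathcal{S}(E)$, split into even and odd subspaces, and compute $\Sp_2(F)$-invariant functionals directly; the odd part will admit none, while the even part will admit a nonzero invariant functional precisely when $a\in \delta E^1\setminus(\delta E^1\cap (E^\times)^2)$.

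The main obstacle will be the precise tracking of the $\chi_\delta$-twist and of the splitting $\Sp_2(F)\hookrightarrow\Mp_2(E)$ through the see-saw identity. The splitting is non-canonical and depends on the choice of $\psi$, so verifying that the resulting computation matches the Gan-Savin formula for the $L$-parameter of $\tau^\delta$ is the key compatibility underlying the $\epsilon$-factor condition in (i). Secondary difficulties will be the Mackey-theoretic book-keeping in (ii)---handling the covering cocycle on each double coset and ruling out unwanted boundary contributions---and, in (iii), showing that existence of the invariant distribution reduces exactly to the stated square-class condition on $a$.
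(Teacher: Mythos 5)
Your high-level plan—route the problem through the Gan--Savin bijection and a see-saw, then fall back on classical $\GL_2$-distinction—is the right family of ideas, but the specific see-saw you set up is not a see-saw pair, and the mechanism you invoke to produce the factor of $2$ in part (i) is wrong. The diagram with $\Mp_2(E)\supset\Sp_2(F)$ on the left column and $\Oo(V_3^\pm)(E)\supset\Oo(V_3^\pm)(F)$ on the right is not a valid see-saw: the two rows are dual pairs in \emph{different} ambient symplectic groups ($\Sp_{12}(F)$ vs.\ $\Sp_6(F)$), so the cross-pairs $(\Mp_2(E),\Oo_3(F))$ and $(\Sp_2(F),\Oo_3(E))$ are not dual pairs. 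The correct partner of $\Sp_2(F)$ inside $\Sp(W_1\otimes\Res_{E/F}V_{3,E})$ is $\Oo(\Res_{E/F}V_{3,E})(F)$, not $\Oo_3(F)$. More importantly, the factor $2$ does \emph{not} come from the two connected components $\Oo_3=\SO_3\cdot\{\pm1\}$. In the paper's proof the $2$ arises because one embeds $\Mp_2(E)\hookrightarrow\Mp_4(F)$, writes the degenerate principal series as $\mathcal{I}(0)=R^{2,1}(\mathbf{1})\oplus R^{3,0}(\mathbf{1})$ (big theta lifts of the trivial representations of $\PGL_2(F)$ and of $PD^\times$), applies the Casselman criterion (Proposition~\ref{casselman}) to show a tempered $\tau^\delta$ only sees the open orbit in $\tilde P\backslash\Mp_4(F)/\Mp_2(E)$, so that $\dim\Hom_{\SL_2(F)}(\tau,\mathbb{C})=\dim\Hom_{\Mp_2(E)}(\mathcal{I}(0),\tau^\delta)$, and then uses the see-saw twice to get $\dim\Hom_{\PGL_2(F)}(\pi,\mathbb{C})+\dim\Hom_{PD^\times}(\pi,\mathbb{C})$, which equals $2\dim\Hom_{\PGL_2(F)}(\pi,\mathbb{C})$ only after invoking Prasad's theorem that the two summands agree for square-integrable $\pi$. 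None of these steps appear in your outline, and a direct see-saw as you propose does not by itself explain the reduction from $\Hom_{\SL_2(F)}(\tau,\mathbb{C})$ to $\Hom_{\PGL_2(F)}(\pi,\mathbb{C})$.

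A second gap in part (i): you only handle $\tau^\delta$ lifting to the split tower $\PGL_2(E)$. The case $\tau^\delta=\theta_\psi(\pi^{D_E})$ (theta lift from $PD_E^\times$) requires a separate argument showing the multiplicity vanishes—the paper does this with a double see-saw through $\PGSp_4(E)$ and $\PGSO(V_8^-)$ together with the structure of $\mathcal{I}(1)=R^{3,2}(\mathbf{1})$—and your proposal is silent on it. For parts (ii) and (iii) your suggestions (Mackey theory on $B(E)\backslash\Sp_2(E)/\Sp_2(F)$, and a direct Schr\"odinger-model computation) are genuinely different from the paper's route (which keeps everything in the $\mathcal{I}(0)$ framework for (ii) and uses a see-saw with a one-dimensional quadratic space $V_a$ for (iii)); those alternative routes are plausible but you would still need to track the metaplectic cocycle on each coset and, crucially, to see the $\chi_\delta$-twist emerge from the non-canonical splitting $\Sp_2(F)\hookrightarrow\Mp_2(E)$—a point your proposal flags as an obstacle but does not resolve.
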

Let us give a brief introduction to the proof of Theorem \ref{mainthm}. Assume that $\tau^\delta$ corresponds to the representation $\pi$ of $\PGL_2(E)$ under $\theta_{\psi}$. Then the sum below (which is well-known) $$\dim\Hom_{\PGL_2(F)}(\pi,\mathbb{C})+\dim\Hom_{PD^\times}(\pi,\mathbb{C}) $$ equals to the dimension 
$\dim\Hom_{\Mp_{2}(E) }(I(0),\tau^\delta)$, where $I(s)$ is the degenerate principal series of $\Mp_4(F)$.
We will consider the double coset decomposition for $\tilde{P}\backslash\Mp_{4n}(F)/\Mp_{2n}(E)$ in general (see Proposition \ref{casselman}), where $P$ is the Siegel parabolic subgroup of $\Sp_{4n}(F)$ and $\tilde{P}$ is the preimage of $P$ in $\Mp_{4n}(F).$ It turns out that only the open orbit contributes to the dimension
$\dim\Hom_{\Mp_{2n}(E) }(I(0),\tau^\delta) $ if $\tau$ is  tempered.
Because the concrete embedding of the stabilizer $\Sp_{2n}(F)$ of the open orbit (unique) in $P\backslash\Sp_{4n}(F)/\Sp_{2n}(E)$ into $\Sp_{2n}(E)$ is different from the natural embedding $\Sp(W_n)\hookrightarrow\Sp(W_{n,E})=\Sp_{2n}(E),$ differing by an adjoint action of $g_\delta,$ we obtain that 
\[\dim\Hom_{\SL_2(F)}(\tau,\mathbb{C})=\dim\Hom_{\Mp_{2}(E)}(I(0),\tau^\delta)=\dim\Hom_{\PGL_2(F)}(\pi,\mathbb{C})+\dim\Hom_{PD^\times}(\pi,\mathbb{C}) .  \]
We also have an analogue result for the higher dimension. (See \S \ref{sect.proof} and \S\ref{subsect.Mp(4)} for more details.)
\par
Now we briefly describe the contents and the organization of this paper.
In \S $2$, we set up the notation about the  local theta lifts. Then we recall the classification for genuine representations of $\Mp_2$ in \S $3.$ In \S\ref{sect.splitting}, we will focus on the explicit splitting $\Sp_{2n}(F)\hookrightarrow\Mp_{2n}(E)$.
The proof of Theorem \ref{mainthm} will be given in \S \ref{sect.proof}. Then we  use the results of metaplectic disctinction problems to deal with the distinction problem of the classical group $\PGSp_4$ related to the Saito-Kurokawa lifts in \S \ref{sect.application}.
Finally we give a short discussion for the relation between the Prasad conjecture for the group $\GSpin_{2n+1}$ and metaplectic distinction problems in \S \ref{subsect.prasad}.
\subsection*{Acknowledgement}
The author is grateful to  Wee Teck Gan for his guidance and numerous discussions when he was doing his Ph.D. study at National University of Singapore. This project was starting from the conversation among  Shiv Prakash PATEL,   Dipendra PRASAD and the author, when he was participating in the Doctoral School "Introduction to Relative Aspects in Representation Theory, Langlands Functoriality and Automorphic Forms" at   CIRM Luminy in May $2016.$ He would like to thank CIRM for supporting his participation as well. This work was partially supported by an MOE Tier one grant R-146-000-228-114.
The author also thanks the anonymous referees for their helpful comments on earlier versions.
\section{The Local Theta Correspondences }
In this section, we will briefly recall some results about the local theta correspondence, following \cite{kudla1996notes}.  

Let $F$ be a local field of characteristic zero.
Consider the dual pair $\Oo(V)\times \Mp(W).$
For our purpose, we may assume that $\dim V$ is odd . Fix a nontrivial additive character $\psi$ of $F.$
Let $\omega_\psi$ be the Weil representation for $\Oo(V)\times \Mp(W).$ 
If $\pi$ is an irreducible (genuine) representation of $\Oo(V)$ (resp. $\Mp(W)$), the maximal $\pi$-isotypic quotient of the Weil representation $\omega_{\psi}$ has the form 
\[\pi\boxtimes\Theta_\psi(\pi)=\pi\boxtimes\Theta_{V,W,\psi}(\pi)~~ (\mbox{resp. }\Theta_{W,V,\psi}(\pi) \boxtimes\pi) \]
for some smooth genuine representation of $\Mp(W)$ (resp. some smooth representation of $\Oo(V)$). We call $\Theta_\psi(\pi )$ or $\Theta_{V,W,\psi}(\pi)$ (resp. $\Theta_{W,V,\psi}(\pi)$)
the big theta lift of $\pi$. Let $\theta_\psi(\pi)$  or $\theta_{V,W,\psi}(\pi)$ (resp. $\theta_{W,V,\psi}(\pi)$) be the maximal semisimple quotient of $\Theta_\psi(\pi),$ which is called the small theta lift of $\pi.$ 
\begin{thm} \cite{gan2014howe,gan2014proof} One has
	\begin{enumerate}[(i)]
		\item $\theta_\psi(\pi)$ is irreducible whenever $\Theta_\psi(\pi)$ is non-zero.
		\item the map $\pi\mapsto \theta_\psi(\pi)$ is injective on its domain.
	\end{enumerate}
\end{thm}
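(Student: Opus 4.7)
The plan is to recast (i) and (ii) in a single multiplicity-one form: both are equivalent to the assertion that for any irreducibles $\pi$ of $\Oo(V)$ and $\sigma$ of $\Mp(W)$ one has
\[ \dim\Hom_{\Oo(V)\times\Mp(W)}(\omega_\psi,\pi\boxtimes\sigma)\leq 1, \]
with equality holding for at most one $\sigma$ once $\pi$ is fixed (and symmetrically). The strategy I would follow is the \emph{doubling method} combined with Kudla's filtration of the Jacquet module, as carried out by Gan-Takeda in the cited papers.

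First I would induct on $\dim W$. For the inductive step, consider the Jacquet module $J_P(\omega_\psi)$ of the Weil representation along a maximal parabolic $P$ of $\Mp(W)$; Kudla's filtration exhibits it as a series of successive quotients built out of Weil representations attached to dual pairs of smaller rank, twisted by one-dimensional characters. If $\sigma$ is not supercuspidal, realize it as a subquotient of some $\mathrm{Ind}_P^{\Mp(W)}(\tau)$ with $\tau$ irreducible on a Levi; then Frobenius reciprocity combined with the Kudla filtration bounds $\dim\Hom(\omega_\psi,\pi\boxtimes\sigma)$ by a sum of analogous dimensions for smaller-rank pairs, which are controlled by the inductive hypothesis. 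This reduces the problem to the case where $\sigma$ (and, by symmetry, $\pi$) is supercuspidal.

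The heart of the argument is the supercuspidal case. Here I would invoke the see-saw dual pair relating $(\Oo(V),\,\Oo(V)\times\Oo(V))$ and $(\Mp(W)\times\Mp(W),\,\Mp(W\oplus W^-))$, where $W^-$ denotes $W$ with negated symplectic form: applied to a pair of supercuspidals $(\pi,\pi')$ of $\Oo(V)$ whose theta lifts share a common irreducible quotient, it identifies the relevant Hom-space with $\Hom_{\Mp(W\oplus W^-)}(I(s_0),\,\pi\boxtimes(\pi')^\vee)$ for a particular value $s_0$ of a degenerate principal series $I(s)$ on $\Mp(W\oplus W^-)$. Supercuspidality of $(\pi,\pi')$ forces only the open-orbit contribution to survive in the orbit-style filtration of $I(s_0)$, reducing the desired multiplicity bound to a property of this one degenerate principal series.

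The main obstacle I expect is to pin down the socle and the composition series of $I(s_0)$ at this distinguished point and to verify that the corresponding Hom-space is at most one-dimensional. This is what makes the conservation relation across Witt towers, together with Rallis's tower dichotomy, indispensable; these are precisely the delicate inputs whose proofs form the technical core of \cite{gan2014howe,gan2014proof}. Once the degenerate principal series has been sufficiently analyzed, both (i) and (ii) follow at once from the uniform multiplicity-one bound.
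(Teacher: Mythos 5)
The paper itself gives no proof of this theorem: it is imported verbatim from \cite{gan2014howe,gan2014proof}, with only the remark that the $p\neq 2$ case goes back to Waldspurger. So there is no in-paper argument to measure your sketch against; I can only assess it as an account of the cited proof.

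Your overall architecture is the right one and matches the Gan--Takeda strategy: reformulate (i)--(ii) as a joint multiplicity-one/uniqueness statement for $\Hom_{\Oo(V)\times\Mp(W)}(\omega_\psi,\pi\boxtimes\sigma)$; reduce to the doubly supercuspidal case via Kudla's filtration of the Jacquet modules of $\omega_\psi$; and treat the supercuspidal case through the doubling see-saw, the degenerate principal series $I(s)$ on $\Mp(W\oplus W^-)$, and the conservation relation.

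There is, however, a concrete error in the see-saw step that would derail the argument if carried out literally. The Hom-space you write, $\Hom_{\Mp(W\oplus W^-)}\bigl(I(s_0),\,\pi\boxtimes(\pi')^\vee\bigr)$, is not well-formed: $I(s_0)$ is a representation of $\Mp(W\oplus W^-)$, while $\pi\boxtimes(\pi')^\vee$ is a representation of $\Oo(V)\times\Oo(V)$, and neither group acts on the other module. What the doubling see-saw actually yields is
\[
\Hom_{\Mp(W)\times\Mp(W)}\bigl(I(s_0),\,\sigma\boxtimes(\sigma')^\vee\bigr)\;\cong\;\Hom_{\Oo(V)^\Delta}\bigl(\Theta(\sigma)\otimes\Theta(\sigma')^\vee,\,\mathbb{C}\bigr),
\]
with $\sigma,\sigma'\in\Irr(\Mp(W))$ and $I(s_0)$ restricted along $\Mp(W)\times\Mp(W^-)\hookrightarrow\Mp(W\oplus W^-)$. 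The see-saw pair is also mislabelled: the two dual pairs are $\bigl(\Oo(V)\times\Oo(V),\Mp(W)\times\Mp(W)\bigr)$ and $\bigl(\Oo(V)^\Delta,\Mp(W\oplus W^-)\bigr)$, linked by $\Oo(V)^\Delta\subset\Oo(V)\times\Oo(V)$ and $\Mp(W)\times\Mp(W)\subset\Mp(W\oplus W^-)$; as you wrote them, $\Oo(V)$ and $\Oo(V)\times\Oo(V)$ do not form a dual pair. Finally, you slightly misattribute the role of the conservation relation: it does not serve to pin down the socle or composition series of $I(s_0)$ (that structure was established independently by Kudla--Rallis, Kudla--Sweet, et al.); rather, in Gan--Takeda it supplies the cross-tower dichotomy that rules out two distinct supercuspidals sharing the same theta lift. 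These are fixable, but each must be repaired before the supercuspidal step can be made rigorous.
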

It is called the Howe duality conjecture which has been proven by Waldspurger \cite{waldspurger1990demonstration} when $p\neq2$.

 \subsection{First occurence indices for pairs of orthogonal Witt towers} Let $W_n$ be the $2n$-dimensional symplectic vector space over $F$ with associated metaplectic group $\Mp(W_n)$ and consider the two towers of orthogonal groups attached to the quadratic spaces with trivial discriminant. More precisely, let $V_3$ (resp. $D^\circ$) be the $3$-dimensional quadratic vector space
 in the $4$-dimensional split (resp. non-split) quaternion algebra over $F$, let $\mathbb{H}$ be the  hyperbolic plane over $F$,
\[V_{2r+1}^+=V_3\oplus \mathbb{H}^{r-1}\quad(\mbox{resp.}\quad V_{2r+1}^-= D^\circ\oplus\mathbb{H}^{r-1}) \]
and denote the orthogonal groups by $\Oo(V_{2r+1}^+)$ (resp. $\Oo(V_{2r+1}^-)$). For an irreducible genuine representation $\pi$ of $\Mp(W_n),$ one may consider the theta lifts $\theta^+_r(\pi)$ and $\theta^-_r(\pi)$ to
$\Oo(V^+_{2r+1})$ and $\Oo(V_{2r+1}^-)$ respectively, with respect to a fixed non-trivial additive character $\psi.$ Set
\[\begin{cases}
r^+(\pi)=\inf\{2r+1:\theta^+_r(\pi)\neq0 \};\\
r^-(\pi)=\inf\{2r+1:\theta^-_r(\pi)\neq0 \}.
\end{cases} \]
Then Kudla and Rallis \cite{kudla2005first}, B. Sun and C. Zhu \cite{sun2012conservation} showed:
\begin{thm}
	[Conservation Relation] For any irreducible representation $\pi$ of $\Mp(W_n),$ we have
	\[r^+(\pi)+r^-(\pi)=4n+4=4+2\dim W_n. \]
\end{thm}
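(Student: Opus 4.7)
The plan is to follow the strategy of Kudla-Rallis and Sun-Zhu, which rests on realizing the Weil representations as quotients of a degenerate principal series on a doubled metaplectic group. Let $W_n^-$ denote $W_n$ with its symplectic form negated, so that $W_n \oplus W_n^-$ is a $4n$-dimensional split symplectic space and we have an embedding $\Mp(W_n) \times \Mp(W_n^-) \hookrightarrow \Mp(W_n \oplus W_n^-)$. The relevant see-saw has the dual pair $(\Mp(W_n \oplus W_n^-), \Oo(V_{2r+1}^\epsilon))$ on the top and $(\Mp(W_n) \times \Mp(W_n^-), \Oo(V_{2r+1}^\epsilon)^{\Delta})$ on the bottom, where $\Oo(V_{2r+1}^\epsilon)^{\Delta}$ denotes the diagonal in $\Oo(V_{2r+1}^\epsilon) \times \Oo(V_{2r+1}^\epsilon)$.

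Via this see-saw, the non-vanishing of $\theta_r^\epsilon(\pi)$ translates to the existence of a non-zero $\Mp(W_n) \times \Mp(W_n^-)$-equivariant map from $\pi \boxtimes \pi^\vee$ to the restriction of the Weil representation $\omega_{\psi, V_{2r+1}^\epsilon, W_n \oplus W_n^-}$. The central tool is the Rallis embedding of this Weil representation into a degenerate principal series $I_\epsilon(s_r)$ of $\Mp(W_n \oplus W_n^-)$ induced from the Siegel parabolic, at a parameter $s_r$ linear in $r$ and with an inducing character encoding the Witt tower $\epsilon = \pm$. Non-vanishing of both $\theta_{r^+}^+(\pi)$ and $\theta_{r^-}^-(\pi)$ thus yields simultaneous non-trivial maps
\[
\Hom_{\Mp(W_n)\times\Mp(W_n^-)}(\pi \boxtimes \pi^\vee, I_\pm(s_{r^\pm})) \neq 0,
\]
whose compatibility is governed by the reducibility structure of $I_\pm(s)$ worked out by Kudla-Rallis.

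The upper bound $r^+(\pi) + r^-(\pi) \leq 4n+4$ then follows from this reducibility analysis: the two Witt towers contribute to complementary irreducible constituents symmetric about a critical value of $s$, so both first occurrences happening ``too early'' would conflict with the known submodule structure of $I(s)$ at the relevant reducibility point. This is essentially the classical Kudla-Rallis half of the argument. The main obstacle is the opposite inequality $r^+(\pi) + r^-(\pi) \geq 4n+4$, proven by Sun-Zhu. Here one assumes both $r^+(\pi)$ and $r^-(\pi)$ are strictly smaller than their conservation value, and derives a contradiction via a delicate analysis of the Siegel-parabolic filtration on the Weil representation of $\Mp(W_n) \times \Oo(V_m^\epsilon)$, combined with the Moeglin-Vigneras-Waldspurger involution to exchange $\pi$ and $\pi^\vee$. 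The bulk of the technical work lies in this inductive Jacquet-module computation on the Witt rank; it is the conceptual heart of the Sun-Zhu argument and by far the hardest step in the proof.
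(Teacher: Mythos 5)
The paper does not prove this theorem. It is stated as a citation to Kudla--Rallis \cite{kudla2005first} and Sun--Zhu \cite{sun2012conservation} and is used as a black box; there is no argument in the text to compare your proposal against.

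That said, your sketch is a reasonable summary of how the conservation relation is actually proved in those references: the doubling method realizes $\pi\boxtimes\pi^\vee$ inside degenerate principal series on $\Mp(W_n\oplus W_n^-)$ via the Rallis embedding, and the two first occurrence indices are controlled by the constituent structure of $I_\pm(s)$; the remaining inequality requires the Moeglin--Vigneras--Waldspurger involution together with Jacquet module computations along the Siegel parabolic, which is the technical heart of Sun--Zhu. One point you should double-check is the attribution of the two inequalities. In the standard account, the doubling/reducibility argument of Kudla--Rallis yields the \emph{lower} bound $r^+(\pi)+r^-(\pi)\geq 4n+4$ (first occurrence cannot happen too early in both towers simultaneously, since the two embeddings of $\pi\boxtimes\pi^\vee$ into $I_\pm(s)$ would then clash with the submodule structure at the relevant point of reducibility), whereas the genuinely new content of Sun--Zhu is the \emph{upper} bound $r^+(\pi)+r^-(\pi)\leq 4n+4$ (a non-vanishing statement: at least one tower must see $\pi$ by the conservation dimension), established by the Jacquet-module/MVW analysis. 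Your write-up assigns these the other way around. This does not affect the correctness of the overall structure of the argument, but it would matter in a genuine write-up since the two halves use quite different techniques and you would otherwise be pointing the reader to the wrong reference for each step.
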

\subsection{See-saw identities}
 Let $(V,q)$ be a quadratic vector space over $E.$  Let $V'=\Res_{E/F}V$ be the  same space $V$ but now thought of as a vector space over $F$ with a quadratic form
\[q'(v)=\frac{1}{2}tr_{E/F}(q(v)). \]
If $W_0$ is a symplectic vector space over $F,$ then $W_0\otimes_F E$
is a symplectic vector space over $E.$ Then we have the following isomorphism of symplectic spaces:
\[\Res_{E/F }[(W_0\otimes_F E )\otimes_E V ]\cong W_0\otimes V':=\mathbf{W} \]
There is a pair 
\[(\Mp(W_0),\Oo(V') )\mbox{  and  }(\Mp(W_0\otimes_F E),\Oo(V)) \]
of dual pairs in the metaplectic group $\Mp(\mathbf{W}).$

A pair $(G,H)$ and $(G',H')$ of dual pairs in the metaplectic group $\Mp(\mathbf{W})$ is called a see-saw pair if $H\subset G'$ and $H'\subset G$.
\[\xymatrix{G\ar@{-}[rd]\ar@{-}[d] &G'\ar@{-}[d]\\ H\ar@{-}[ru] &H' } \]
\begin{lem}
	For a see-saw pair of dual pairs $(G,H)$ and $(G',H')$,  let $\pi$ be a genuine representation of $H$ and $\pi'$ of $H'$. Then we have an isomorphism
	\[\Hom_H(\Theta_\psi(\pi'),\pi )\cong \Hom_{H'}(\Theta_{\psi}(\pi),\pi' ). \]
	\end{lem}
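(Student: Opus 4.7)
The plan is to identify both sides of the claimed isomorphism with a common third space, namely
\[\Hom_{H \times H'}(\omega_\psi,\, \pi \boxtimes \pi'),\]
using the universal property that characterizes the big theta lift as a maximal isotypic quotient of the Weil representation.

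First I would unpack that universal property for $\Theta_\psi(\pi')$. By definition, writing $\omega_\psi[\pi']$ for the intersection of the kernels of all $H'$-equivariant maps from $\omega_\psi$ to copies of $\pi'$, there is a canonical $(G'\times H')$-equivariant isomorphism
\[\omega_\psi/\omega_\psi[\pi']\;\cong\;\Theta_\psi(\pi')\boxtimes\pi'\]
coming from the dual pair $(G',H')$ inside $\Mp(\mathbf{W})$. The see-saw inclusion $H\subset G'$ means that $H$ commutes with $H'$ in $\Mp(\mathbf{W})$, so $\omega_\psi[\pi']$ is automatically $H$-stable and the displayed isomorphism restricts to an isomorphism of smooth $(H\times H')$-representations. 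Any $(H\times H')$-equivariant map $\omega_\psi\to\pi\boxtimes\pi'$ is in particular $H'$-equivariant with target that is $\pi'$-isotypic, hence must factor through $\omega_\psi/\omega_\psi[\pi']$. Assuming $\pi'$ irreducible (the only case of interest), Schur's lemma yields
\[\Hom_{H\times H'}(\omega_\psi,\,\pi\boxtimes\pi')\;\cong\;\Hom_{H\times H'}(\Theta_\psi(\pi')\boxtimes\pi',\,\pi\boxtimes\pi')\;\cong\;\Hom_H(\Theta_\psi(\pi'),\pi).\]

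Next I would run exactly the same argument for the other dual pair $(G,H)$, using the see-saw inclusion $H'\subset G$. This gives
\[\Hom_{H\times H'}(\omega_\psi,\,\pi\boxtimes\pi')\;\cong\;\Hom_{H'}(\Theta_\psi(\pi),\,\pi'),\]
and combining the two identifications yields the desired isomorphism.

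I do not expect any serious obstacle: the lemma is essentially a formal manipulation. The one point that must be articulated carefully is the compatibility underlying both reductions, namely that $\omega_\psi[\pi']$ is $H$-stable (so that $\Theta_\psi(\pi')\boxtimes\pi'$ inherits an $(H\times H')$-structure through $H\hookrightarrow G'$), and symmetrically for the other factor; both facts are immediate consequences of the commutativity of $H$ and $H'$ in $\Mp(\mathbf{W})$, which is built into the definition of a see-saw pair.
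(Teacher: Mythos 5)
Your argument is correct and is essentially the standard see-saw duality proof — identifying both sides with $\Hom_{H\times H'}(\omega_\psi,\pi\boxtimes\pi')$ via the universal property of the maximal isotypic quotient — which is precisely the argument in Prasad's paper that this lemma cites (the paper itself omits the proof and refers to Prasad). The only implicit hypothesis you add, irreducibility of $\pi$ and $\pi'$, is already built into the definition of the big theta lift, so there is no gap.
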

	The proof is similar to the one given by Prasad in \cite[Page 6]{prasad1996some}. So we omit it here.

\section{Representations of metaplectic group $\Mp_2$ }
The whole material in this section comes from \cite[\S2]{gan2011shimura}.
The Weil representation $\omega_\psi$ of $\Mp_2(F)$ which is realized on the Schwartz space $\mathcal{S}(F)$ is reducible, and decomposes as
\[\omega_\psi=\omega_{\psi}^+\oplus\omega_{\psi}^-, \]
where $\omega_{\psi}^+$ is realized on the subspace of even functions
and $\omega_{\psi}^- $ is realized on the subspace of odd functions.
\par
Given $a\in F^\times/(F^\times )^2 ,$ we have 
\[\omega_{\psi_a}=\omega_{\psi,\chi_a}=\omega_{\psi,\chi_a}^+\oplus\omega_{\psi,\chi_a}^-, \]
 where $\chi_a(x)=(x,a)_F$ for any $ x\in F^\times.$
Given a character $\mu$ of the torus $T\cong F^\times,$  one may define 
\[\pi_\psi(\mu)=ind_{\tilde{B} }^{\Mp_2(F)}\mu\cdot\chi_\psi\quad(\mbox{normalized induction}) \]
consisting of  smooth functions $f:\Mp_{2}(F)\rightarrow\mathbb{C}$ such that
\[f((tn,\epsilon)\cdot\tilde{g})=\delta_{B}(t)^{1/2}\mu(t)\chi_\psi((t,\epsilon))\cdot f(\tilde{g})\]
 for $t\in T,(tn,\epsilon)\in\tilde{B}$  and $\tilde{g}\in\Mp_{2}(F)$ , 
where $\chi_\psi((t,\epsilon))=\epsilon\cdot\gamma(t,\psi)^{-1}$ is the genuine character of $\tilde{T}$ associated to the Weil index $\gamma(\psi)$ (see \cite[Page 17]{kudla1996notes}) and $$\chi_\psi((-1,1))=\gamma(-1,\psi)^{-1}=\gamma(\psi)/\gamma(\psi_{-1})$$
where $(-1,1)$ lies in the center of $\Mp_2(F)$ and $\psi_{-1}(x)=\psi(-x)$ for $x\in F.$
\par
\begin{prop}[Waldspurger] Assume that $\pi=\pi_\psi(\mu)$ is a principal series of $\Mp_2(F).$
	\begin{enumerate}[(i)]
		\item The representation $\pi_\psi(\mu)$ is
		irreducible if and only if $\mu^2=|-|^{\pm1},$ in which case
		$\pi_{\psi}(\mu)\cong\pi_{\psi}(\mu^{-1}).$
		\item  If $\mu=\chi|-|^{1/2},$ where $\chi$ is a quadratic character, then we have a short exact sequence: 
		\[\xymatrix{0\ar[r]& st_\psi(\chi)\ar[r]&\pi_\psi(\mu)\ar[r]&\omega_{\psi,\chi}^+\ar[r]&0 .} \]
		We call $st_{\psi}(\chi)$ the Steinberg representation associated to $(\psi,\chi).$ When the character $\chi=\mathbf{1},$ we shall simply
		write $st_\psi.$
		\item If $\mu=\chi|-|^{-1/2}$ then we have a short exact sequence
		\[\xymatrix{0\ar[r]& \omega_{\psi,\chi}^+\ar[r]&\pi_\psi(\mu)\ar[r]&sp_\psi(\chi)\ar[r]&0 .} \]
	\end{enumerate}
\end{prop}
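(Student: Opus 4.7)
My plan is to analyze $\pi_\psi(\mu)$ through its Bruhat/Jacquet filtration together with the standard intertwining operator $M(\mu)\colon \pi_\psi(\mu)\to \pi_\psi(\mu^{-1})$, and then to identify the irreducible subquotients in the reducible cases with the Weil representations $\omega_{\psi,\chi}^+$ recalled earlier in this section.

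For part (i), the Bruhat decomposition $\Mp_2(F)=\tilde{B}\sqcup\tilde{B}w\tilde{B}$ equips $\pi_\psi(\mu)$ with a two-step $\tilde{B}$-filtration whose $\tilde{T}$-exponents correspond (after the modulus shift by $\delta_B^{1/2}$) to $\mu\cdot\chi_\psi$ on the closed orbit and $\mu^{-1}\cdot\chi_\psi$ on the open orbit. By Frobenius reciprocity, $\pi_\psi(\mu)$ is irreducible if and only if the standard intertwining operator $M(\mu)$ is an isomorphism, equivalently iff $M(\mu^{-1})\circ M(\mu)$ is a nonzero scalar. A rank-one intertwining integral computation, adapted to the metaplectic setting via the Weil index, expresses this scalar in terms of a local $\gamma$-factor attached to $\mu^2$; the doubling $\mu\mapsto\mu^2$ is forced by the pairing of $\chi_\psi$ with itself inside the integrand and reflects the Shimura correspondence on Satake parameters. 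One then concludes that $M(\mu)$ fails to be an isomorphism exactly when $\mu^2=|-|^{\pm 1}$, with irreducibility and the isomorphism $\pi_\psi(\mu)\cong\pi_\psi(\mu^{-1})$ in all other cases.

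For (ii), take $\mu=\chi|-|^{1/2}$; here $M(\mu)$ develops a pole, so $\pi_\psi(\mu)$ has length two. To identify the quotient with $\omega_{\psi,\chi}^+$, I would use the Schr\"odinger model from \cite[\S I.3]{kudla1996notes}: a direct computation shows that the space of $N$-coinvariants of $\omega_{\psi,\chi}^+$ is one-dimensional, with $\tilde{T}$ acting via the open-orbit exponent $\chi\cdot|-|^{1/2}\cdot\chi_\psi$. Frobenius reciprocity then produces a nonzero map $\pi_\psi(\chi|-|^{1/2})\twoheadrightarrow \omega_{\psi,\chi}^+$, and defining $st_\psi(\chi)$ as its kernel gives (ii). Part (iii) follows by duality: either by applying $M(\mu)$ at the point of reducibility to exchange the roles of sub and quotient, or by passing to contragredients using that $\pi_\psi(\mu)^\vee\cong\pi_{\bar\psi}(\mu^{-1})$.

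The main technical obstacle is the careful bookkeeping of $\chi_\psi$ and of the Weil-index cocycle through the Bruhat filtration, the Weyl action, and the intertwining integral. This bookkeeping is exactly what produces the squared character $\mu^2$ in the reducibility condition and explains the sharp difference from $\SL_2(F)$, where reducibility also occurs at nontrivial quadratic $\mu$. Once these normalizations are pinned down, the identification of $\omega_{\psi,\chi}^+$ inside the principal series reduces to matching a single explicit $\tilde{T}$-exponent, and the remaining assertions are formal consequences of the length-two structure together with the exactness of the Jacquet functor.
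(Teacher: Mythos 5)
The paper does not prove this proposition: Section~3 opens with the declaration that the whole section is quoted from \cite[\S2]{gan2011shimura}, and the statement is attributed to Waldspurger. So there is no ``paper proof'' to compare against; what you have written is a sketch of the direct intertwining-operator/Jacquet-module proof, which is a legitimate and classical route, distinct from (and more self-contained than) simply transporting the result across the $\Mp_2\leftrightarrow\PGL_2$ theta correspondence as the cited survey does.

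Within that direct route, parts (ii) and (iii) are fine in outline: the Jacquet module of $\omega_{\psi,\chi}^+$ is indeed one-dimensional (while that of $\omega_{\psi,\chi}^-$ vanishes, which is what makes the odd Weil representations supercuspidal, as the paper's remark records), and matching the single exponent pins down which of $\pi_\psi(\chi|\cdot|^{\pm 1/2})$ has $\omega_{\psi,\chi}^+$ as quotient versus subrepresentation. Applying $M(\mu)$ at $\mu=\chi|\cdot|^{1/2}$ or dualizing is a standard way to pass from (ii) to (iii).

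The genuine gap is in part (i), precisely at the point you yourself flag as the ``sharp difference'' from $\SL_2(F)$. Your argument is that $\pi_\psi(\mu)$ is irreducible iff $M(\mu)$ is an isomorphism, iff $M(\mu^{-1})\circ M(\mu)$ is a nonzero scalar involving a $\gamma$-factor of $\mu^2$. But when $\mu$ is a nontrivial quadratic character, $\mu=\mu^{-1}$ and $M(\mu)$ is a \emph{self}-intertwiner of $\pi_\psi(\mu)$; the normalized square being a nonzero scalar only tells you $M(\mu)$ has eigenvalues $\pm c$, not that it is scalar. For $\SL_2(F)$ at such $\mu$, both eigenspaces occur and $\pi(\mu)$ splits; for $\Mp_2(F)$ it does not, and your sketch gives no reason why. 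Concluding irreducibility here requires an extra input beyond the composite scalar: either show $\dim\Hom_{\Mp_2(F)}(\pi_\psi(\mu),\pi_\psi(\mu))=1$ (for instance by showing the two-step Jacquet-module filtration of $\pi_\psi(\mu)_N$, which has both exponents equal to $\mu\chi_\psi\delta_B^{1/2}$ in this case, is a \emph{nonsplit} extension as a $\tilde T$-module, in contrast to $\SL_2$), or count $\psi_a$-Whittaker functionals, or fall back on theta lifting to $\PGL_2(F)$ where $\pi(\mu,\mu^{-1})$ is visibly irreducible for quadratic $\mu$. The phrase ``the pairing of $\chi_\psi$ with itself forces the doubling $\mu\mapsto\mu^2$'' explains where the $\gamma$-factor of $\mu^2$ comes from, but it does not supply the missing multiplicity-one statement at $\mu^2=\mathbf{1}$. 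Separately, note that the proposition as printed in the paper reads ``irreducible if and only if $\mu^2=|\cdot|^{\pm1}$,'' which is a typo for $\mu^2\neq|\cdot|^{\pm1}$; your proposal correctly argues for the corrected statement.
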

\begin{rem}
	The odd Weil representations $\omega_{\psi,\chi}^-$ are supercuspidal.
\end{rem}
There are  explicit theta correspondences under $\psi$ between $\Irr(\PGL_2(F))\sqcup \Irr(PD^\times)$ and $\Irr(\Mp_2(F))$ obtained by Waldspurger in \cite{waldspurger1991correspondances}, which are also summarized in \cite[Page 9]{gan2011shimura}.

\begin{table}[h]
		\renewcommand*{\arraystretch}{1.5}
		\caption{Theta correspondences between $\PGL_2(F)$ and $\Mp_2(F)$}
\begin{tabular}{|c|c|c|c|c|c|}
	\hline
	$\pi \in$ Irr($\PGL_2(F)$)&$\pi(\mu,\mu^{-1} )$&$st_\chi,\chi\neq\mathbf{1}$& st&$\chi\circ\det$&\mbox{supercuspidal}\\
	\hline
	$\tau\in$ Irr($\Mp_2(F)$ )&$\pi_\psi(\mu)$& $st_{\psi,\chi }$&$\omega_\psi^-$&$\omega_{\psi,\chi}^+$&\mbox{supercuspidal}\\
	\hline
\end{tabular}
\end{table}

The representations on the first row correspond to those on the second row under theta correspondence.

\begin{table}[h]
		\renewcommand*{\arraystretch}{1.5}
		\caption{Theta correspondence between $PD^\times$ and $\Mp_2(F)$}
\begin{tabular}{|c|c|c|c| }
	\hline
	$\pi\in $ Irr$(PD^\times)$& $\chi\circ N_{D},\chi\neq\mathbf{1}$&$\mathbf{1}$&$\dim>1$\\
	\hline
	$\tau\in $ Irr $(\Mp_2(F))$&$\omega_{\psi,\chi}^- $&$st_\psi $&supercuspidal\\
	\hline
\end{tabular}
\end{table}

These tables will be very useful in the proof of Theorem \ref{mainthm}.

\section{The splitting $\Sp_{2n}(F)\hookrightarrow \Mp_{2n}(E)$}\label{sect.splitting}
This section focuses on the concrete splitting map $\Sp_{2n}(F)\hookrightarrow\Mp_{2n}(E)$.
Recall that
\[(g_1,\epsilon_1)(g_2,\epsilon_2)=(g_1g_2,\epsilon_1\epsilon_2\cdot c_{Rao}(g_1,g_2)) \]
for $(g_i,\epsilon_i)\in\Mp_{2n}(E)$, where  $$c_{Rao}(-,-):\Sp_{2n}(E)\times\Sp_{2n}(E)\longrightarrow\{\pm1\}$$
is a cocycle defined in \cite[Theorem I.4.5]{Kudla1992}, i.e.
\[c_{Rao}(g_1,g_2)=c\cdot\langle x(g_1),x(g_2)\rangle_E\cdot\langle -x(g_1)x(g_2),x(g_1g_2)\rangle_E \]
where $c\in\{\pm1\}$ is a constant and $x:\Sp_{2n}(E)\longrightarrow E^\times/(E^\times)^2$ is a function defined by Rao. (See \cite[Page 19]{Kudla1992} for more details.) 
 Note that the Hilbert symbol
\[\langle-,-\rangle_E:E^\times\times E^\times\longrightarrow\{\pm1 \} \]
is trivial when restricted on $F^\times\times F^\times$. Then  the restricted cocycle $c_{Rao}(-,-)|_{\Sp_{2n}(F)\times\Sp_{2n}(F) }$
is trivial. Thus there exists a splitting
\[\Sp_{2n}(F)\hookrightarrow\Mp_{2n}(E) \]
due to the group embedding $\Sp_{2n}(F)=\Sp(W_n)\hookrightarrow\Sp(W_{n,E})=\Sp_{2n}(E)$.

Given a representation $\tau\in \Irr(\Mp_{2n}(E))$ and $g\in\GSp_{2n}(E)$, we define
\[\tau^g(\tilde{y})=\tau^g((y,\epsilon))=\tau((g^{-1}yg,\epsilon)) \]
for $\tilde{y}=(y,\epsilon)\in\Mp_{2n}(E)$.
\begin{lem}
	Given $g\in\GSp_{2n}(F)$ and $\tau\in \Irr (\Mp_{2n}(E))$, we have
	\[\dim\Hom_{\Sp_{2n}(F)}(\tau,\mathbb{C} )=\dim\Hom_{\Sp_{2n}(F)}(\tau^g,\mathbb{C})  \]
\end{lem}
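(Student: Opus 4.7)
The plan is to exhibit an explicit isomorphism $\Hom_{\Sp_{2n}(F)}(\tau,\mathbb{C}) \cong \Hom_{\Sp_{2n}(F)}(\tau^g,\mathbb{C})$ realized by the identity map on the underlying vector space of $\tau=\tau^g$.

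First, I would recall that the very definition $\tau^g((y,\epsilon))=\tau((g^{-1}yg,\epsilon))$ presupposes that $c_{g^{-1}}\colon (y,\epsilon)\mapsto(g^{-1}yg,\epsilon)$ is a group automorphism of $\Mp_{2n}(E)$; this is the classical $\GSp_{2n}(E)$-conjugation invariance of the Rao cocycle. For $g\in\GSp_{2n}(F)$, this automorphism further preserves the subgroup $\Sp_{2n}(F)\subset\Sp_{2n}(E)$.

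Second, I would compare $c_{g^{-1}}$ with the canonical splitting $s\colon\Sp_{2n}(F)\hookrightarrow\Mp_{2n}(E)$ of Section \ref{sect.splitting}. Since $g$ normalizes $\Sp_{2n}(F)$, both $h\mapsto s(g^{-1}hg)$ and $h\mapsto c_{g^{-1}}(s(h))$ are splittings of $\Sp_{2n}(F)$ into $\Mp_{2n}(E)$, and they must coincide because the splitting is unique: any two differ by a character $\Sp_{2n}(F)\to\mu_2$, and $\Sp_{2n}(F)$ is perfect for $F$ a nonarchimedean local field of characteristic zero. Consequently
\[c_{g^{-1}}(s(h))=s(g^{-1}hg)\quad\text{for all }h\in\Sp_{2n}(F).\]

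Third, I would assemble the pieces. For any linear functional $\ell$ on the common underlying space and any $h\in\Sp_{2n}(F)$,
\[\ell(\tau^g(s(h))v)=\ell(\tau(c_{g^{-1}}(s(h)))v)=\ell(\tau(s(g^{-1}hg))v).\]
This equals $\ell(v)$ for every $h$ and $v$ precisely when $\ell$ is $\Sp_{2n}(F)$-invariant for $\tau$, since $h\mapsto g^{-1}hg$ is a bijection of $\Sp_{2n}(F)$. Thus the identity map on vectors induces a bijection of the two Hom spaces, establishing equality of dimensions.

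The only subtle ingredient is the uniqueness of the splitting, which relies on $\Sp_{2n}(F)$ admitting no nontrivial $\mu_2$-valued character; this is guaranteed by perfectness of $\Sp_{2n}(F)$ over a nonarchimedean local field of characteristic zero. Should one wish to bypass this abstraction, the identity $c_{g^{-1}}(s(h))=s(g^{-1}hg)$ can instead be checked directly from the explicit form of the Rao cocycle, using the vanishing of the Hilbert symbol on $F^\times\times F^\times$ already exploited in Section \ref{sect.splitting}.
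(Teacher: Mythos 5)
Your proof is correct and unpacks, with the identity map on the underlying space serving as the explicit isomorphism, the same observation the paper's one-line proof invokes: conjugation by $g\in\GSp_{2n}(F)$ preserves the split copy of $\Sp_{2n}(F)$ inside $\Mp_{2n}(E)$. The uniqueness-of-splitting detour (perfectness of $\Sp_{2n}(F)$) is dispensable here: since the restricted Rao cocycle is trivial, the splitting is simply $s(h)=(h,1)$, and the key identity $c_{g^{-1}}(s(h))=(g^{-1}hg,1)=s(g^{-1}hg)$ follows at once from the defining formula for $\tau^g$ together with $g^{-1}hg\in\Sp_{2n}(F)$, without any appeal to characterlessness or a second look at the Rao cocycle.
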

It follows from the fact that $\GSp_{2n}(F)$ normalizes the subgroup $\Sp_{2n}(F)$ in $\Mp_{2n}(E)$.
\section{Proof of  Theorem \ref{mainthm}}\label{sect.proof}

The key idea in the proof of Theorem \ref{mainthm} is to use the see-saw identity to transfer the metaplectic distinction problem to that of the pair $(\PGL_2(E),\PGL_2(F))$, which has been studied in  \cite[Theorem 2.5.2]{lu2016}.
\par
Let $P=M\cdot N$ be the Siegel parabolic subgroup of $\Sp(W_n)$. Then the preimage $\tilde{P}$ of $P$ in $\Mp(W_n)$ is of the form
\[\tilde{P}=\tilde{M}\cdot N \]
where $\tilde{M}=\widetilde{\GL}_n(F)$ is a $2$-fold cover of $\GL_n(F).$ There is a natural genuine character of $\widetilde{\GL}_n(F)$ defined by
\[\chi_\psi:(g,\epsilon)\mapsto \epsilon\cdot\gamma(\det(g),\psi)^{-1} \]
for $g\in\GL_n(F)$ and $\epsilon\in\{\pm1 \}.$
\par
Let $\mathcal{I}(s)$ be the degenerate principal series representation of $\Mp(W_n),$ i.e.
\[\mathcal{I}(s)=ind_{\tilde{P}}^{\Mp(W_n)}(\chi_\psi\cdot|\det|^s)\quad(\mbox{normalized induction}) \]
which consists of  the smooth functions $f:\Mp(W_n)\rightarrow\mathbb{C}$ such that
\[f(\tilde{m}n\tilde{g})=\delta_P(m)^{\frac{1}{2}}\chi_\psi(\tilde{m})|\det (m)|^s\cdot f(\tilde{g})\]
   for  $\tilde{m}\in\tilde{M},n\in N$ and $\tilde{g}\in\Mp(W_n)$. 
\begin{lem}\label{orbitforMp(4)}
	Suppose that $n=2$ and the map $$i:\Mp_{2}(E)\longrightarrow\Mp_4(F)$$ is the embedding due to the geometric embedding $\Sp(W_{1,E})\hookrightarrow\Sp(W_2)$.  There is a decreasing $\Mp_2(E)$-equivariant filtration 
	\[\mathcal{I}(s)|_{\Mp_2(E)}\supset \mathcal{I}_1(s)\supset 0\] 
	of $\mathcal{I}(s)|_{\Mp_2(E)}$  such that  $\mathcal{I}_1(s)\cong ind_{\SL_2(F)}^{\Mp_2(E)}\mathbb{C} $(compact induction)
	and $\mathcal{I}(s)/\mathcal{I}_1(s)\cong ind_{\tilde{B}}^{\Mp_{2}(E)}(\chi_\psi\cdot|\det|_E^{s+1/2})$.
\end{lem}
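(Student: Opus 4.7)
The strategy is to apply the geometric lemma (Mackey theory) to the restriction $\mathcal{I}(s)|_{\Mp_2(E)}$, which is governed by the double coset space $P\backslash \Sp_4(F)/\Sp_2(E)$. First I would classify the orbits of $\Sp_2(E)$ on the Lagrangian Grassmannian $P\backslash\Sp_4(F)$: because the $F$-symplectic form on $W_2=\Res_{E/F}W_{1,E}$ is the trace of the $E$-form, every $E$-line in $W_{1,E}$ is automatically an $F$-Lagrangian, and these fill out a single closed orbit $O_0$ with stabilizer the $E$-Borel $B\subset \Sp_2(E)$. The complement is stratified by the dimension of $L\cap\delta L$; the generic stratum $L\oplus\delta L=W_2$, with representative $L_1=Fe_1+F\delta e_2$, forms the open orbit $O_1$, and a direct matrix computation shows that $\mathrm{Stab}_{\Sp_2(E)}(L_1)$ is conjugate, via $g_\delta=\mathrm{diag}(\delta,1)\in \GSp_2(E)$, to the image of the natural embedding $\Sp(W_1)=\SL_2(F)\hookrightarrow \Sp_2(E)$.

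Next I would extract the filtration from the orbit stratification. Since $O_1$ is open, compactly supported sections along $O_1$ form a subrepresentation $\mathcal{I}_1(s)\cong\mathrm{ind}_{\SL_2(F)}^{\Mp_2(E)}\sigma$, where $\sigma$ is the restriction of $\chi_\psi\cdot|\det|^s$ (after the normalisation by $\delta_P^{1/2}$) to the stabilizer. Because $\det\equiv 1$ on $\SL_2$ and $\chi_\psi$ is trivial on the splitting image of $\SL_2(F)$ (the Rao cocycle vanishes there, as in \S\ref{sect.splitting}), we obtain $\sigma=\mathbf{1}$. The closed orbit $O_0$ gives the quotient $\mathcal{I}(s)/\mathcal{I}_1(s)\cong \mathrm{ind}_{\tilde{B}}^{\Mp_2(E)}(\chi_\psi|\det|_E^{s+1/2})$: the action of $a\in E^\times\cong T$ on the line $Ee_1$, viewed as a $2$-dimensional $F$-Lagrangian, has $F$-determinant $N_{E/F}(a)$, so $|\det|^s$ pulls back to $|a|_E^s$ and $\delta_P^{1/2}$ to $|a|_E^{3/2}$, while the normalisation for $\mathrm{ind}_{\tilde{B}}^{\Mp_2(E)}$ contributes $\delta_B^{1/2}=|a|_E$, producing the net shift $s\mapsto s+1/2$.

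The main obstacle is tracking the genuine character $\chi_\psi$ through the embedding $\Mp_2(E)\hookrightarrow \Mp_4(F)$. One must verify that the $F$-Weil index character on $\widetilde{\GL}_2(F)$, pulled back via the Levi of $P$, agrees on $\tilde{T}\subset \Mp_2(E)$ with the $E$-Weil index character defining $\chi_\psi$ on that side, and further that it restricts trivially on the splitting of $\SL_2(F)$ into $\Mp_2(E)$. Both reduce to standard relations between $\gamma(N_{E/F}(a),\psi)$ and $\gamma(a,\psi_E)$ (with $\psi_E=\psi\circ\tfrac{1}{2}\mathrm{tr}_{E/F}$) combined with the triviality of Rao's cocycle on $\Sp_2(F)$ established in \S\ref{sect.splitting}.
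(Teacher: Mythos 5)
Your argument is correct and follows the same route the paper takes: the paper establishes the lemma by invoking the double coset decomposition $\tilde{P}\backslash\Mp_4(F)/\Mp_2(E)$ (equation~\eqref{Mp(4,F)-decomp}) and the resulting Mackey filtration, with the open orbit contributing the compactly induced subrepresentation and the closed orbit the quotient, exactly as you describe. Your identification of the open-orbit stabilizer as the $g_\delta$-conjugate of the natural $\SL_2(F)$, your computation of the determinant and modular-character shift giving the exponent $s+1/2$, and your reduction of the genuine-character compatibility to the relation between $\gamma(N_{E/F}(a),\psi)$ and $\gamma(a,\psi_E)$ together with the triviality of the Rao cocycle on $\Sp_2(F)$ all match the paper's (largely implicit) reasoning.
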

Note that the double coset decomposition for $\tilde{P}\backslash\Mp_4(F)/\Mp_{2}(E)=P\backslash\Sp_4(F)/\SL_2(E)$
implies that
\begin{equation}\label{Mp(4,F)-decomp}
\Mp_4(F)=\tilde{P}\cdot\Mp_2(E)\sqcup \tilde{P}\cdot\eta_0\cdot\Mp_2(E) 
\end{equation}
where $\eta_0$ represents the open orbit, whose stabilizer subgroup in $\Mp_2(E)$ is isomorphic to $\SL_2(F).$ However, the embedding $\SL_2(F)\hookrightarrow\Mp_{2}(E)$ in $\mathcal{I}_1(s)$ is not induced from the natural geometric embedding map $i:\Sp(W_1)\longrightarrow\Sp(W_{1,E}),$ but induced from the composite map $Ad_{g_\delta}\circ i$ of the conjugation map $Ad_{g_\delta}:\SL_2(E)\longrightarrow\SL_{2}(E)$ and the embedding map $i.$ 
\par
 An irreducible genuine admissible representation $\tau$ is said to occur on the boundary of $\mathcal{I}(s)$ at $s=s_0$ if
\[\Hom_{\Mp_2(E)}(\mathcal{I}(s_0)/\mathcal{I}_1(s_0),\tau )\neq0. \]
Moreover, if $\tau$ does not occur on the boundary of $\mathcal{I}(s_0),$ then the cuspidal supports of $\tau$ and $\mathcal{I}(s_0)/\mathcal{I}_1(s_0)$ are disjoint. Hence $\mathrm{Ext}^1_{\Mp_{2}(E)}(\mathcal{I}(s_0)/\mathcal{I}_1(s_0),\tau)=0$ and so the long exact sequence implies
\[\dim \Hom_{\Mp_{2}(E) }(\mathcal{I}(s_0),\tau)=\dim\Hom_{\Mp_{2}(E)}(\mathcal{I}_1(s_0),\tau )= \dim\Hom_{\SL_2(F)}(\mathbb{C},\tau^\delta). \]
\begin{prop}\label{casselman} Let us define the embedding $i:\Mp_2(E)\longrightarrow\Mp_4(F)$ and $\mathcal{I}(s)$ as above.
	If $\tau$ is a tempered representation of $\Mp_2(E),$ then
	$\tau$ does not occur on the boundary of $\mathcal{I}(0).$
\end{prop}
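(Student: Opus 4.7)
The plan is to identify $\mathcal{I}(0)/\mathcal{I}_1(0)$ as a specific principal series of $\Mp_2(E)$ and to read off its irreducible quotients directly from Waldspurger's classification in Section~3. By Lemma~\ref{orbitforMp(4)} at $s=0$, we have
\[
\mathcal{I}(0)/\mathcal{I}_1(0)\;\cong\;\mathrm{ind}_{\tilde{B}}^{\Mp_2(E)}\bigl(\chi_\psi\cdot|\det|_E^{1/2}\bigr),
\]
which in the notation of Section~3 is precisely $\pi_\psi(\mu)$ with $\mu=|-|_E^{1/2}$. This is the reducibility point $\mu=\mathbf{1}\cdot|-|_E^{1/2}$ addressed by Waldspurger's structure theorem with trivial quadratic twist, so there is a short exact sequence
\[
0\longrightarrow st_\psi\longrightarrow \pi_\psi\bigl(|-|_E^{1/2}\bigr)\longrightarrow \omega_\psi^+\longrightarrow 0,
\]
exhibiting the even Weil representation $\omega_\psi^+$ as the \emph{unique} irreducible quotient of $\mathcal{I}(0)/\mathcal{I}_1(0)$.

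Next I would argue that $\omega_\psi^+$ is not tempered. Under the Waldspurger/Gan--Savin bijection (Table~1), $\omega_\psi^+$ corresponds to the trivial character $\mathbf{1}\circ\det$ of $\PGL_2(E)$, a one-dimensional non-tempered representation; by Theorem~\ref{1}, temperedness is preserved across the correspondence, so $\omega_\psi^+$ is non-tempered. Equivalently, $\omega_\psi^+$ is the Langlands quotient of the standard module $\pi_\psi(|-|_E^{1/2})$ whose exponent $1/2$ lies strictly in the positive Weyl chamber, hence non-tempered by the Langlands quotient theorem.

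Consequently, if $\tau$ is any irreducible tempered genuine representation of $\Mp_2(E)$, then $\tau\not\cong\omega_\psi^+$. Since $\omega_\psi^+$ is the only irreducible quotient of $\mathcal{I}(0)/\mathcal{I}_1(0)$, any nonzero map $\mathcal{I}(0)/\mathcal{I}_1(0)\to\tau$ would force $\tau\cong\omega_\psi^+$, a contradiction. Therefore
\[
\Hom_{\Mp_2(E)}\bigl(\mathcal{I}(0)/\mathcal{I}_1(0),\tau\bigr)=0,
\]
which by definition says that $\tau$ does not occur on the boundary of $\mathcal{I}(0)$. The main bookkeeping hurdle is matching normalizations between the degenerate principal series of $\Mp_4(F)$ and the principal series of $\Mp_2(E)$ from Section~3 so that the shift $s+1/2$ appearing in Lemma~\ref{orbitforMp(4)} really lands on Waldspurger's reducibility point; once this check is made, the proof is formal and uses nothing beyond the results already recalled in the paper.
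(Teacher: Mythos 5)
Your proof is correct but takes a genuinely different route from the paper's. The paper invokes the Casselman temperedness criterion (via Ban--Jantzen): for a tempered $\tau$, the center of $\widetilde{\GL_i(E)}$ acts on any irreducible subquotient of the relevant Jacquet module of $\tau$ by a character $\chi_\psi\mu|-|_E^\alpha$ with $\mu$ unitary and $\alpha\leq 0$, whereas the exponent $s_0+i/2$ appearing in $\mathcal{I}_{i+1}(s_0)/\mathcal{I}_i(s_0)$ is strictly positive at $s_0=0$; this mismatch kills the boundary Hom, and the argument works uniformly for all $n$ (which the paper later needs, e.g.\ for Proposition~\ref{pgsp+pgu}). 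You instead specialize to $n=1$ and identify the lone boundary piece $\mathcal{I}(0)/\mathcal{I}_1(0)$ explicitly as the standard module $\pi_\psi(|-|_E^{1/2})$, whose unique irreducible quotient is the Langlands quotient $\omega_\psi^+$, non-tempered; hence no nonzero $\Mp_2(E)$-map from the boundary to an irreducible tempered $\tau$ can exist. Your route is more concrete and elementary and gives the exact structure of the boundary, but it does not readily generalize to higher $n$. One small point worth making explicit: uniqueness of the irreducible quotient of $\pi_\psi(|-|_E^{1/2})$ is not literally contained in the short exact sequence from Waldspurger's structure theorem recalled in Section~3 (a priori the sequence could split, and $st_\psi$ \emph{is} tempered, which would sink the argument); it is the Langlands quotient theorem, which you do cite, that makes the final implication ``$\Hom_{\Mp_2(E)}(\mathcal{I}(0)/\mathcal{I}_1(0),\tau)\neq 0\ \Rightarrow\ \tau\cong\omega_\psi^+$'' airtight.
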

\begin{proof}This is due to the Casselman criterion for temperedness \cite[Proposition 3.5]{ban2013langlands}.
	
	Let us consider the general case. Given a tempered representation $\tau$ of $\Mp_{2n}(E)$ and the degenerate principal series representation $\mathcal{I}(s)$ of $\Mp_{4n}(F),$ then it turns out that $\tau$ does not  occur on the boundary of $\mathcal{I}(s_0)$ with $s_0\geq0$. Note that there are $(n+1)$ orbits in the double coset decomposition $\tilde{P}\backslash\Mp_{4n}(F)/\Mp_{2n}(E),$ where $P$ is the Siegel parabolic subgroup of $\Sp_{4n}(F)$ and $\tilde{P}$ is its preimage in $\Mp_{4n}(F).$ There is a decreasing $\Mp_{2n}(E)$-equivariant filtration 
	\[\mathcal{I}(s)=\mathcal{I}_{n+1}(s)\supset \mathcal{I}_n(s)\supset\cdots\supset \mathcal{I}_2(s)\supset\mathcal{I}_1(s)\supset 0 \]
	of $\mathcal{I}(s)|_{\Mp_{2n}(E)}$ 
	such that $\mathcal{I}_1(s)\cong ind_{\Sp_{2n}(F)}^{\Mp_{2n}(E)}\mathbb{C}$ and $$\mathcal{I}_{i+1}(s)/\mathcal{I}_i(s)\cong ind_{Q_i}^{\Mp_{2n}(E)}(\chi_\psi|\det|_E^{s+i/2}\otimes\mathbb{C})$$
	 for $i=1,2,\cdots,n.$ Here $Q_i\cong (\widetilde{\GL_i(E)}\times\Sp_{2n-2i}(F))\cdot (Mat_{2i,2n-2i}(F)\times Sym^i(E)),$ where $Mat_{m,n}(F)$ is the matrix space consisting of all $m\times n$ matrices and $Sym^i(E)$ consists of symmetric matrices in $Mat_{i,i}(E)$. If 
	\[\Hom_{\Mp_{2n}(E) }(ind_{Q_i}^{\Mp_{2n}(E)}(\chi_\psi |\det|_E^{s+i/2}\otimes\mathbb{C}),\tau )\neq0, \]
	then $$\Hom_{\widetilde{\GL_i(E)}}(\chi_\psi|\det|_E^{s+i/2},R_{\overline{\tilde{Q}_i}}(\tau))\neq0, $$ where $\tilde{Q}_i=(\widetilde{\GL_i(E)}\times_{\mu_2}\Mp_{2n-2i}(E) )\cdot(Mat_{i,2n-2i}(E)\times Sym^i(E))$, $\overline{\tilde{Q}_i}$ stands for the parabolic subgroup opposite to $\tilde{Q}_i$
	and $R_{\overline{\tilde{Q}_i}}$ indicates the normalized Jacquet functor with respect to $\overline{\tilde{Q}_i}$.
		Thanks to \cite[Proposition 3.5]{ban2013langlands} that  the center of $\widetilde{\GL_i(E)}$ acts on any irreducible subquotient of $R_{\overline{\tilde{Q}_i}}$ by a character of the form $\chi_\psi\mu|-|^\alpha_E$ with $\mu$ unitary and $\alpha\leq0$, we obtain that the tempered representation $\tau$ does not occur on the boundary of $\mathcal{I}(0),$ i.e. \[\Hom_{\Mp_{2n}(E) }(\mathcal{I}(0),\tau)\neq0
	\]  
	which implies that   \[ \Hom_{\Mp_{2n}(E) }(\mathcal{I}_1(0),\tau)\cong \Hom_{\Sp_{2n}(F)}(\mathbb{C},\tau^\delta)\neq0.\]
	Thus we have finished the proof.
\end{proof}
Now we start to prove Theorem \ref{mainthm}.
\begin{proof}
[Proof of Theorem \ref{mainthm}] Let $\tau$ be a genuine representation of $\Mp(W_{1,E})=\Mp_2(E)$ where $W_{1,E}=W\otimes_FE.$
	\begin{enumerate}[(i)]
		\item Assume that 
	 $\tau^\delta=\theta_{\psi}(\pi)$, where $\pi$ is a square-integrable representation of $\PGL_2(E)$. Then the character $\eta_{\tau^\delta}$ of the component group $A_{\phi_{\tau^\delta}}=A_{\phi_{\tau}}$ is trivial, i.e. $$\eta_\tau(-1)=\epsilon(1/2,\phi_{\tau})\epsilon(1/2,\phi_{\tau}\otimes\chi_{\delta})\langle -1,\delta\rangle_E$$ and  the see-saw identity implies that
	\[\Hom_{\PGL_2(F) }(\pi,\mathbb{C} )\cong \Hom_{\Mp_2(E) }(R^{2,1}(\mathbf{1} ),\tau^\delta )\hookrightarrow \Hom_{\Mp_2(E)}(\mathcal{I}(0),\tau^\delta ), \]
	where $R^{2,1}(\mathbf{1} )$ is the big theta lift to $\Mp_4(F)$ of the trivial representation of $\PGL_2(F)$. 
	By the structure of the degenerate principal series $\mathcal{I}(0)$ of $\Mp_4(F)$ (see \cite[Proposition 7.2]{gan2014formal}), we have
	\[\mathcal{I}(0)=R^{2,1}(\mathbf{1})\oplus R^{3,0}(\mathbf{1}), \]
	where $R^{3,0}(\mathbf{1})$ is the big theta lift to $\Mp_4(F)$ of the trivial representation from the non-split group $PD^\times$. Hence one has
	\begin{equation*}
	\begin{split}
	\Hom_{\Mp_2(E) }(\mathcal{I}(0),\tau^\delta )&=\Hom_{\Mp_2(E)}(R^{2,1}(\mathbf{1})\oplus R^{3,0}(\mathbf{1}) ,\tau^\delta) \\
	&=\Hom_{\Mp_2(E)}(R^{2,1}(\mathbf{1}),\tau^\delta )\oplus\Hom_{\Mp_2(E)}(R^{3,0}(\mathbf{1}),\tau^\delta )\\
&=	\Hom_{\PGL_2(F)}(\pi,\mathbb{C} )\oplus \Hom_{PD^\times}(\pi,\mathbb{C} ).
	\end{split}
	\end{equation*}
	Since $\tau^\delta$ is a square-integrable representation, Proposition \ref{casselman} implies that $\tau^\delta$ does not occur on the boundary of $\mathcal{I}(0)$. So we can obtain the identity
	\begin{equation*}
	\begin{split}
	\dim \Hom_{\SL_2(F)}(\tau,\mathbb{C} )&=\dim \Hom_{\Mp_2(E) }(\mathcal{I}(0),\tau^\delta )\\
	&=2\dim \Hom_{\PGL_2(F) }(\pi,\mathbb{C})\\
	&=\begin{cases}
	2,&\mbox{  if  }\phi_{\pi}\mbox{ is conjugete-orthogonal};\\
	0,&\mbox{  otherwise.  }
	\end{cases}
	\end{split}
	\end{equation*}
	Here we use the fact that $\dim \Hom_{PD^\times}(\pi,\mathbb{C} )=\dim \Hom_{\PGL_2(F)}(\pi,\mathbb{C} ) $ for a square-integrable representation $\pi$ of $\PGL_2(E)$, due to \cite[Theorem C]{Dipendra1992invariant}.  The multiplicity $\dim \Hom_{\PGL_2(F) }(\pi,\mathbb{C})$ is $1$ if and only if the Langlands parameter $\phi_\pi$ is conjugate-orthogonal (see \cite[Theorem 2.5.3]{lu2016}).
	\par
	Let $D_E$ be the division quaternion algebra over $E$ with a reduced norm $N_{D_E}$. Let $V_{D_E}=(D_E,N_{D_E})$ be the $4$-dimensional non-split quadratic space over $E$ with determinant $1$.
	\par
	If $\tau^\delta=\theta_{\psi}(\pi^{D_E} ) ,$ where $\pi^{D_E}$ is the Jacquet-Langlands correspondence representation  of $PD_E^\times$ associated to $\pi$,  then $\theta_{W_{1,E},V_{5,E},\psi }(\tau^\delta)=\theta_{V_{D_E},W_{2,E},\psi}(\pi^{D_E}\boxtimes\mathbb{C} ) $
	as representations of $\PGSp_4(E)$, where $\pi^{D_E}\boxtimes\mathbb{C}$ is an irreducible representation of $$\mathrm{GSO}(V_{D^E})\cong \frac{D_E^\times\times D_E^\times }{\{(t,t^{-1}) , t\in E^\times\}}.$$ (See \cite[Page 219]{schmidt2005saito} for more details.) Here $V_{5,E}=V_5^+\otimes_FE$ and $\SO(V_{5,E})\cong \PGSp_4(E).$
	 Consider the following see-saw diagrams
	\begin{equation}\label{doubleseesaw}
	\xymatrix{ R^{3,2}(\mathbf{1})&\Mp_4(F)\ar@{-}[d]\ar@{-}[rd]& \PGSp_4(E)\ar@{-}[d]\ar@{-}[rd]& \PGSO(V_8^-)\ar@{-}[ld]\ar@{-}[d]&0 \\ \tau^\delta& \Mp_2(E)\ar@{-}[ru]&\PGSp_4(F)& \PGSO(V_{D_E})&\pi^{D_E}\boxtimes\mathbb{C}  } \end{equation}
	where $R^{3,2}(\mathbf{1})$ is the theta lift to $\Mp_4(F)$ of the trivial representation of $\PGSp_4(F)$.
	Thanks to \cite[Theorem 4.2.18(i)]{lu2016}, one can obtain 
	\begin{equation*}
	\begin{split}
	\Hom_{\Mp_2(E) }(R^{3,2}(\mathbf{1}),\tau^\delta )&= \Hom_{\PGSp_4(F)}(\Theta_{W_{1,E},V_{5,E},\psi }(\tau^\delta),\mathbb{C} )\\
	&=\Hom_{\PGSp_4(F)}(\Theta_{V_{D_E},W_{2,E},\psi}(\pi^{D_E}\boxtimes\mathbb{C}),\mathbb{C} )\\
	&=0.
	\end{split}
	\end{equation*}
	Since $R^{3,2}(\mathbf{1})=\mathcal{I}(1)$ as representations of $\Mp_4(F)$ due to \cite[Proposition 7.2]{gan2014formal} and the square-integrable representation $\tau^\delta$ does not occur on the boundary of $\mathcal{I}(1),$ one has 
	\[\dim \Hom_{\SL_2(F) }(\tau,\mathbb{C} )=\dim \Hom_{\Mp_2(E) }(\mathcal{I}(1),\tau^\delta)=\dim \Hom_{\Mp_2(E) }(R^{3,2}(\mathbf{1}),\tau^\delta)=0. \] 
	In fact, if $\tau^\delta=\theta_\psi(\pi^{D_E}),$ then $\eta_{\tau^\delta}$ is a nontrivial character of the component group $A_{\phi_{\tau}}$, i.e., $$\eta_\tau(-1)=-\epsilon(1/2,\phi_{\rho})\epsilon(1/2,\phi_{\rho}\otimes\chi_{\delta})\langle -1,\delta\rangle_E$$ and $\Theta_{\psi}(\tau^\delta)=0$ as a representation of $\PGL_2(E).$
	\item If $\tau=\pi_{\psi}(\mu)$ with $\mu^2\neq|-|_E$ , then 
	\[\Hom_{\Mp_{2}(E)}(\mathcal{I}(0)/\mathcal{I}_1(0),\tau )=\Hom_{\Mp_{2}(E)}(ind_{\tilde{B}(E)}^{\Mp_{2}(E)}\chi_{\psi}|\det|_E^{1/2},\tau )=0  \]
and so	 $\tau$ does not occur on the boundary of $\mathcal{I}(0)$. Thus
	\begin{equation*}
	\begin{split}
	\dim \Hom_{\SL_2(F)}(\pi_{\psi_E}(\mu\cdot\chi_{\delta}),\mathbb{C} )&=\dim\Hom_{\SL_2(F)}((\tau^\delta)^\vee,\mathbb{C})\\
	&=\dim\Hom_{\SL_2(F)}(ind_{\SL_2(F)}^{\Mp_{2}(E)}\mathbb{C},\tau)\\
	&=\dim\Hom_{\Mp_{2}(E) }(\mathcal{I}(0),\tau)\\
	&=\dim\Hom_{\Mp_{2}(E) }(R^{2,1}(\mathbf{1})\oplus R^{3,0}(\mathbf{1}),\tau)\\
	&=\dim \Hom_{PD^\times}(\pi(\mu,\mu^{-1} ),\mathbb{C})+ \dim \Hom_{\PGL_2(F) }(\pi(\mu,\mu^{-1}),\mathbb{C} ) \\
	&=\begin{cases}
	2,&\mbox{ if }\mu=\chi_F\circ N_{E/F};\\
	1,&\mbox{ if }\mu|_{F^\times}=\mathbf{1}\mbox{ and }\mu^2\neq\mathbf{1};\\
	0&\mbox{other cases}.
	\end{cases}
	\end{split}
	\end{equation*}
	Here we use the result for $\PGL(2)$-distinction problems that $\pi(\mu,\mu^{-1})$ is $\PGL_2(F)$-distinguished (resp. $PD^\times$-distinguished) if and only if the Langlands parameter $\mu+\mu^{-1}$ is conjugate-orthogonal (resp. $\mu$ factors through the norm map $N_{E/F}$). (See \cite[Theorem 2.5.3]{lu2016}.)
\item If $\tau $ is the even or odd Weil representation of $\Mp_2(E),$ set $V_a$ to be the $1$-dimensional quadratic space with a quadratic form $q(x)=ax^2,a\in E^\times/(E^\times )^2$ and $V'=\Res_{E/F}V_a$. Consider the following see-saw diagram  
\[\xymatrix{\tau &\Mp(W_{1,E})\ar@{-}[d]\ar@{-}[rd]& \Oo(V')\ar@{-}[d]\ar@{-}[ld]&R(\mathbf{1})  \\ \mathbb{C}&\Sp( W_1)& \Oo(V_a) } \]
where $\tau$ is a representation of $\Mp(W_{1,E})$ and $\mathbb{C}$ is the trivial representation of $\Sp(W_1)=\SL_{2}(F)$.
 If disc$V'\neq 1\in F^\times/(F^\times )^2$, then
the theta lift $R(\mathbf{1})$ to $\Oo(V')$  of the trivial representation of $\SL_2(F)$ is zero, so that $\Hom_{\SL_2(F)}(\omega^\pm_{\psi,\chi_a},\mathbb{C} )=0.$ If disc$V' =1\in F^\times/(F^\times )^2$, then $\Oo(V')=\Oo_{1,1}(F)$ and
\begin{equation*}
\begin{split}
\dim \Hom_{\SL_2(F)}(\omega^+_{\psi,\chi_a},\mathbb{C} )&=\dim \Hom_{\Oo(V_a)}(ind_{F^\times}^{\Oo_{1,1}(F)}|-|,\mathbf{1} )\\&=\begin{cases}
2,&\mbox{ if }N_{E/F}(a)=-\delta^2\in F^\times/(F^\times )^2; \\
0,&\mbox{ otherwise.}
\end{cases}
\end{split}
\end{equation*}
Similarly, if $N_{E/F}(a)=-\delta^2\in F^\times/(F^\times )^2 $, then we have the following identity $$\dim\Hom_{\SL_2(F)}(\omega^-_{\psi,\chi_a},\mathbb{C} )=\dim \Hom_{\Oo(V_a)}(ind_{F^\times}^{\Oo_{1,1}(F)}|-|,\det )=0. $$
	\end{enumerate}
This finishes the proof of Theorem \ref{mainthm}.
\end{proof}
\begin{rem}
	Although \cite[Proposition 7.2]{gan2014formal} is written in the sence of orthogonal group $\Oo_{2n+1}(F)$, it also works for the special orthogonal group $\SO_{2n+1}(F)$ due to the conservation relation.
\end{rem}

\section{Application to the Saito-Kurokawa lift}\label{sect.application}
 In  this section, we use the results of  metaplectic distinction problems to deal with the distinction problems for the split group $\PGSp_4\cong\SO_{3,2}$ and its pure inner form $\PGSp_{1,1}\cong\SO_{4,1}$ over a quadratic extension $E/F$.
\par
  Given a discrete series representation $\pi$ of $\PGL_2(E),$
one may consider the composition of theta lifts via
\[ \xymatrix{\Irr(PD_E^\times)\sqcup \Irr(\PGL_2(E))\ar[rr]^-{\theta_{\psi}}&&\Irr(\Mp(W_{1,E}))\ar[rr]^{\theta_{W_{1,E},V_{5,E},\psi}}&&\Irr(\PGSp_4(E)) } .\]
Then $\theta_{W_{1,E},V_{5,E},\psi}\circ \theta_{\psi}(\pi) $ and $\theta_{W_{1,E},V_{5,E},\psi}\circ \theta_{\psi}(\pi^{D_E} ) $  are called \textbf{Saito-Kurokawa lifts} of $\PGSp_4(E)$, where $\pi^{D_E}=JL(\pi)$ and $V_{5,E}=V^+_5\otimes_FE$. We will denote them by $SK(\pi)$ and $SK(\pi^{D_E}) $ respectively. If $\pi$ is an irreducible principal series representation, then $\pi^{D_E}$ does not exist.
\par
Given an irreducible square-integrable genuine representation $\tau=\Theta_{\psi}(\pi)$ of $\Mp(W_{1,E})$, 
the Saito-Kurokawa packet of $\PGSp_4(E)$ associated to $\pi$ has two elements
\[SK(\pi)= \theta_{V_{4,E},W_{2,E},\psi}(\pi\boxtimes\mathbb{C})\mbox{    and    }SK(\pi^{D_E})=\theta_{V_{D_E},W_{2,E},\psi}(\pi^{D_E}\boxtimes\mathbb{C} ) \]
where $V_{4,E}$ (resp. $V_{D_E}$) is the $4$-dimensional split (resp. non-split) quadratic space over $E$ with trivial discriminant, $\pi\boxtimes\mathbb{C}$ (resp. $\pi^{D_E}\boxtimes\mathbb{C}$) is an irreducible representation of $\mathrm{GSO}(V_{4,E})$ (resp. $\mathrm{GSO}(V_{D^E})$) and $W_{2,E}=W_2\otimes_FE$.
\begin{prop} Assume that $\pi$ is an irreducible representation of $\PGL_2(E).$ Then
	\begin{enumerate}[(i)]\label{saitokperiod}
		\item 	Given a square-integrable representation $\pi$ of $\PGL_2(E)$, then 
		\begin{enumerate}[(A)]
			\item 	$\dim\Hom_{\PGSp_4(F) }(SK(\pi^{D_E}),\mathbb{C})=0;$
			\item $\dim \Hom_{\PGSp_4(F)}(SK(\pi),\mathbb{C} )=\dim \Hom_{\PGU_2(D)}(SK(\pi),\mathbb{C} )
			=\begin{cases}
		2,&\mbox{if }\phi_{\pi} \mbox{ is conjugate-orthogonal};\\0,&\mbox{otherwise.}
		\end{cases} $
		\end{enumerate}
	\item If $\pi=\pi(\mu,\mu^{-1}) $ and $\tau=\pi_{\psi}(\mu)$, where $\mu\neq|-|_E^s$ and $s\in\{\pm 3/2,\pm1/2\},$ then \[\dim \Hom_{\PGSp_4(F) }(SK(\pi),\mathbb{C} )=\dim \Hom_{\PGSp_{1,1}(F)}(SK(\pi),\mathbb{C} ) =\dim \Hom_{\SL_2(F) }(\pi_{\psi}(\mu\cdot\chi_{\delta} ),\mathbb{C} ). \]
	\end{enumerate}
\end{prop}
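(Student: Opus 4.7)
The plan is to convert the $\PGSp_4(F)$-distinction (and its pure inner-form analogue) of Saito-Kurokawa lifts into an $\SL_2(F)$-distinction problem on $\Mp_2(E)$ via the same see-saw machinery as in diagram (5.1), and then invoke Theorem \ref{mainthm}.

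First, using the Rallis tower identification from the proof of Theorem \ref{mainthm}(i), I would rewrite
$$SK(\pi) = \theta_{W_{1,E},V_{5,E},\psi}(\tau^\delta) \quad \text{and} \quad SK(\pi^{D_E}) = \theta_{W_{1,E},V_{5,E},\psi}(\tau'^\delta),$$
where $\tau = \theta_\psi(\pi)$ and $\tau' = \theta_\psi(\pi^{D_E})$ are the Waldspurger lifts to $\Mp_2(E)$. The left see-saw in (5.1) then yields
$$\Hom_{\PGSp_4(F)}(SK(\pi), \mathbb{C}) \cong \Hom_{\Mp_2(E)}(R^{3,2}(\mathbf{1}), \tau^\delta),$$
and by \cite[Proposition 7.2]{gan2014formal}, $R^{3,2}(\mathbf{1}) = \mathcal{I}(1)$, the degenerate principal series of $\Mp_4(F)$ at $s=1$.

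Second, I would apply the filtration $0 \subset \mathcal{I}_1(1) \subset \mathcal{I}(1)$ from Lemma \ref{orbitforMp(4)} and show that $\tau^\delta$ does not occur on the boundary. For part (i), $\tau^\delta$ is square-integrable and this follows from the generalized Casselman criterion of Proposition \ref{casselman} at $s_0=1\geq 0$. For part (ii), the exclusion $\mu \neq |-|_E^s$ for $s \in \{\pm 1/2, \pm 3/2\}$ guarantees that the cuspidal support of $\tau^\delta = \pi_\psi(\mu\cdot\chi_\delta)$ is disjoint from the character $\chi_\psi\cdot|\det|_E^{3/2}$ appearing in $\mathcal{I}(1)/\mathcal{I}_1(1) \cong ind_{\tilde B}^{\Mp_2(E)}(\chi_\psi\cdot|\det|_E^{3/2})$. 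In either case the long exact sequence collapses to
$$\Hom_{\Mp_2(E)}(\mathcal{I}(1), \tau^\delta) = \Hom_{\Mp_2(E)}(\mathcal{I}_1(1), \tau^\delta) = \Hom_{\SL_2(F)}(\tau, \mathbb{C}),$$
the last equality absorbing the $\delta$-twist that arises because the stabilizer $\SL_2(F)$ of the open orbit embeds into $\Mp_2(E)$ via $Ad_{g_\delta}\circ i$ rather than the natural embedding, exactly as in the proof of Theorem \ref{mainthm}.

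Finally, I would conclude by citing Theorem \ref{mainthm}: part (i)(A) follows from the vanishing $\Hom_{\SL_2(F)}(\tau', \mathbb{C}) = 0$ established in the $\pi^{D_E}$-half of the proof of Theorem \ref{mainthm}(i); part (i)(B) follows from $\dim\Hom_{\SL_2(F)}(\tau, \mathbb{C}) = 2\dim\Hom_{\PGL_2(F)}(\pi, \mathbb{C})$; and part (ii) follows directly from Theorem \ref{mainthm}(ii). The equalities $\Hom_{\PGSp_4(F)} \cong \Hom_{\PGU_2(D)}$ in (i)(B) and $\Hom_{\PGSp_4(F)} \cong \Hom_{\PGSp_{1,1}(F)}$ in (ii) are obtained by running the parallel see-saw with $V_5^-$ (whose special orthogonal group is the relevant pure inner form $\PGU_2(D)\cong\PGSp_{1,1}(F)$) in place of $V_5^+$, together with the identity $\dim\Hom_{PD^\times}(\pi,\mathbb{C})=\dim\Hom_{\PGL_2(F)}(\pi,\mathbb{C})$ for discrete series from \cite[Theorem C]{Dipendra1992invariant}. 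The main technical obstacle is the cuspidal-support analysis in the principal-series case of part (ii): one must verify that the stated exclusion on $\mu$ really is what prevents $\mathcal{I}(1)/\mathcal{I}_1(1)$ from contributing, rather than a weaker condition.
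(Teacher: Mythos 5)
Your overall strategy matches the paper's: use the see--saw to convert $\PGSp_4(F)$- and $\PGSp_{1,1}(F)$-distinction of $SK(\pi)$ into a $\Hom_{\Mp_2(E)}(R^{3,2}(\mathbf{1}),-)$ space, identify $R^{3,2}(\mathbf{1})=\mathcal{I}(1)$ via \cite[Proposition 7.2]{gan2014formal}, kill the boundary of the two--orbit filtration, and feed the result into Theorem~\ref{mainthm}. There are, however, three places where the write-up needs repair.

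First, there is a $\tau$ versus $\tau^\delta$ mislabel. You set $\tau=\theta_\psi(\pi)$ and then write $SK(\pi)=\theta_{W_{1,E},V_{5,E},\psi}(\tau^\delta)$; by definition $SK(\pi)=\theta_{W_{1,E},V_{5,E},\psi}(\theta_\psi(\pi))=\theta_{W_{1,E},V_{5,E},\psi}(\tau)$. If you keep your chain ending at $\Hom_{\SL_2(F)}(\tau,\mathbb{C})$ and apply Theorem~\ref{mainthm}(i) as stated there, you actually land on $2\dim\Hom_{\PGL_2(F)}(\theta_\psi(\tau^\delta),\mathbb{C})$, not $2\dim\Hom_{\PGL_2(F)}(\pi,\mathbb{C})$, and $\theta_\psi(\tau^\delta)$ has parameter $\phi_\pi\otimes\chi_\delta$, not $\phi_\pi$. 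The fix is simply to adopt the convention $\tau^\delta=\theta_\psi(\pi)$ throughout (as in the proof of Theorem~\ref{mainthm}), so that the open-orbit twist by $Ad_{g_\delta}$ and the twist inside Theorem~\ref{mainthm} cancel exactly; with $\tau=\theta_\psi(\pi)$ they do not.

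Second, your derivation of (i)(A) is circular in this paper's architecture. The vanishing $\Hom_{\SL_2(F)}(\tau',\mathbb{C})=0$ in the $\pi^{D_E}$-half of the proof of Theorem~\ref{mainthm}(i) is itself obtained by running the see--saw back to $\Hom_{\PGSp_4(F)}(\Theta_{V_{D_E},W_{2,E},\psi}(\pi^{D_E}\boxtimes\mathbb{C}),\mathbb{C})$ and citing \cite[Theorem 4.2.18(i)]{lu2016}, which is precisely the assertion of (i)(A). So using Theorem~\ref{mainthm} to establish (i)(A) just makes a round trip; one should cite \cite[Theorem 4.2.18]{lu2016} directly, as the paper does.

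Third, you fold both exclusions on $\mu$ into the cuspidal-support analysis of the boundary, but they play two different roles. Only $\mu\neq|-|_E^{\pm 3/2}$ is what makes $\Hom_{\Mp_2(E)}(\mathcal{I}(1)/\mathcal{I}_1(1),\tau)$ vanish, since $\mathcal{I}(1)/\mathcal{I}_1(1)\cong ind_{\tilde{B}}^{\Mp_2(E)}(\chi_\psi|\det|_E^{3/2})$. The exclusion $\mu\neq|-|_E^{\pm 1/2}$ is needed for a prior reason: by \cite[Lemma 4.2]{gan2009restrictionsaito}, it guarantees that the big theta lift $\Theta_{W_{1,E},V_{5,E},\psi}(\tau)$ is irreducible, so that the see--saw identity, which produces the \emph{big} lift, actually computes $\Hom$ into $SK(\pi)$ (the small lift). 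You flagged the cuspidal-support step as the main technical obstacle; the point is that it only accounts for half the hypothesis, and the other half is an irreducibility input, not a boundary one.
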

\begin{proof}\begin{enumerate}[(i)]
		\item 
		\begin{enumerate}[(A)]
			\item It follows from \cite[Theorem 4.2.18]{lu2016}.
			\item 	Due to the see-saw diagram \eqref{doubleseesaw}, for $\tau=\theta_{\psi}(\pi)$, one has \[\dim\Hom_{\PGSp_4(F) }( SK(\pi),\mathbb{C})=\dim \Hom_{\Mp_2(E) }(\mathcal{I}(1),\tau )=\dim \Hom_{\PGSp_{1,1}(F) }(SK(\pi),\mathbb{C} ) . \]
			Then  the multiplicity $\dim\Hom_{\Mp_{2}(E)}(\mathcal{I}(1),\tau)  $ equals to 
			\begin{equation}\label{SL(2)}
			\dim \Hom_{\SL_2(F)}(\tau^\delta,\mathbb{C} )=2\dim \Hom_{\PGL_2(F) }(\pi,\mathbb{C} ),
			\end{equation} 
		where \eqref{SL(2)} holds	due to Theorem \ref{mainthm}. The desired identity follows from the results for $\PGL_2(F)$-distinction problems obtained in \cite[Theorem 2.5.2]{lu2016}, which  means that 
			$$\dim \Hom_{\PGSp_4(F)}(SK(\pi),\mathbb{C} )=\dim\Hom_{\PGSp_{1,1}(F)}(SK(\pi),\mathbb{C})= \begin{cases}
			2,&\mbox{if }\phi_{\pi} \mbox{ is conjugate-orthogonal};\\0,&\mbox{otherwise.}
			\end{cases} $$
		\end{enumerate}

		\item Thanks to \cite[Lemma 4.2]{gan2009restrictionsaito}, if $\mu$ is neither $|-|_E^{\pm 3/2 }$ nor $|-|_E^{\pm1/2}$, then the big theta lift $\Theta_{W_{1,E},V_{5,E},\psi}(\tau)$ is irreducible. Note that
		there are $2$ orbits for the double coset decomposition $\tilde{P}\backslash\Mp_4(F)/\Mp_2(E)$ in \eqref{Mp(4,F)-decomp}. Moreover, we have
		\[\Hom_{\Mp_{2}(E)}(\mathcal{I}(s_0)/\mathcal{I}_1(s_0),\tau )=\Hom_{\Mp_{2}(E)}(ind_{\tilde{Q}_1}^{\Mp_{2}(E)}(\chi_{\psi}|\det|_E^{s_0+1/2}),\tau )=0 \]
		if $s_0=1$ and $\mu\neq|-|_E^{3/2}$.
		 So $\tau$ does not occur on the boudary of $\mathcal{I}(1)$. Therefore, one has
		\begin{equation*}
		\begin{split}
		\dim \Hom_{\PGSp_4(F)}(SK(\pi),\mathbb{C} )&=	\dim \Hom_{\Mp_2(E)}(\mathcal{I}(1),\tau )\\
		&=\dim \Hom_{\SL_2(F)}(\tau^\delta,\mathbb{C})\\
&=		\dim \Hom_{\SL_2(F) }(\pi_{\psi}(\mu\cdot\chi_{\delta}),\mathbb{C}).
		\end{split}
		\end{equation*} 
	\end{enumerate}
Here we use the fact $\tau^\delta\cong\pi_{\psi}(\mu\cdot\chi_{\delta}).$
Together with Theorem \ref{mainthm}, we can obtain that
  if the character $\mu\neq|-|_E^s$ with $s\in\{\pm3/2,\pm1/2 \}$, then
\[\dim \Hom_{\PGSp_4(F)}(SK(\pi),\mathbb{C} )=\begin{cases}2,&\mbox{ if }\mu|_{E^1}=\mathbf{1};
\\1,&\mbox{ if }\mu|_{F^\times}=\mathbf{1}\mbox{ and }\mu^2\neq\mathbf{1};\\0,&\mbox{ otherwise.}
\end{cases} \]
Note that the see-saw diagram
\[\xymatrix{\PGSp_4(E)\ar@{-}[rd]\ar@{-}[d]&\Mp_4(F)\ar@{-}[ld]\ar@{-}[d] \\ \PGSp_{1,1}(F)&\Mp_2(E) } \]
implies the following 
	\[\dim \Hom_{\PGSp_{1,1}(F)}(SK(\pi),\mathbb{C} )=\dim \Hom_{\Mp_2(E)}(\mathcal{I}(1),\tau )=\dim \Hom_{\SL_2(F) }(\pi_{\psi}(\mu\cdot\chi_{\delta}),\mathbb{C} ). \]
	Hence we have completed the proof.
\end{proof}
\begin{rem}
There is a nontempered representation of $\PGSp_4(E)$ inside the Saito-Kurokawa packet, so it does not belong to the cases discussed in \cite{lu2018GSp(4)}, where the Prasad conjecture \cite{prasad2015arelative}  holds for the tempered representations of $\PGSp_4$.
\end{rem}
\section{On the Prasad conjecture}
In this section, we study the metaplectic distinction problem for higher dimension. Then we combine the Prasad conjecture to formulate a conjectural identity for the multiplicity
\[\dim\Hom_{\Sp_{2n}(F)}(\tau,\mathbb{C})  \]
where $\tau$ is a square-integrable representation of $\Mp_{2n}(E)$.
\subsection{Metaplectic distinction problems for $(\Mp_4(E),\Sp_4(F))$}\label{subsect.Mp(4)}
Using a similar idea, we have the following result for a tempered representation $\tau$ of $\Mp_4(E)$.
\begin{prop}\label{pgsp+pgu}
	Given a  tempered representation $\pi$ of $\PGSp_4(E)$
	and a representation $\tau^\delta=\theta_{\psi}(\pi)$ of $\Mp_4(E)$ with central character $\omega_{\tau^\delta}=\omega_\tau$ satisfying $\omega_\tau(-1,1)=\epsilon(1/2,\pi)/\gamma(-1,\psi),$ 
	then
	\begin{enumerate}[(i)]
		\item we have an identity	\begin{equation}\label{sumformforsp(4)}
		\dim \Hom_{\Sp_4(F)}(\tau,\mathbb{C} )=\dim \Hom_{\PGSp_4(F)}(\pi,\mathbb{C} )+\dim \Hom_{\PGSp_{1,1}(F)}(\pi,\mathbb{C} ),
		\end{equation}
		where $\PGSp_{1,1}$ is the unique pure inner form of $\PGSp_4$ defined over $F$;
		\item the multiplicity
		$\dim \Hom_{\Sp_4(F)}(\tau,\mathbb{C} ) $ is nonzero if and only if the Langlands parameter $\phi_\tau\otimes\chi_{\delta}=\phi_{\pi}$ is conjugate-orthogonal;
	\end{enumerate}
\end{prop}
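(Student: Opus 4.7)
The plan is to parallel the proof of Theorem \ref{mainthm}(i), replacing the dual pair $(\Mp_2(E), \SO(V_{3,E}) = \PGL_2(E))$ with $(\Mp_4(E), \SO(V_{5,E}) = \PGSp_4(E))$ and the ambient metaplectic group $\Mp_4(F)$ with $\Mp_8(F)$. The key see-saw has the diagram
\[\xymatrix@R=1em{\Mp_8(F) \ar@{-}[d] \ar@{-}[rd] & \SO(V_{5,E}) = \PGSp_4(E) \ar@{-}[d] \ar@{-}[ld] \\ \Mp_4(E) & \SO(V_5^\pm) = \PGSp_4(F) \mbox{ or } \PGSp_{1,1}(F) }\]
where $\Mp_4(E) \hookrightarrow \Mp_8(F)$ comes from restriction of scalars and $\SO(V_5^\pm) \hookrightarrow \SO(V_{5,E})$ from scalar extension (note that both $V_5^+$ and $V_5^-$ become split after tensoring with $E$, since $E/F$ splits the quaternion algebra $D$). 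Writing $R^\pm(\mathbf{1})$ for the big theta lifts to $\Mp_8(F)$ of the trivial representations of $\SO(V_5^\pm)$, the see-saw identity gives
\[\Hom_{\PGSp_4(F)}(\pi, \mathbb{C}) \oplus \Hom_{\PGSp_{1,1}(F)}(\pi, \mathbb{C}) \cong \Hom_{\Mp_4(E)}(R^+(\mathbf{1}) \oplus R^-(\mathbf{1}), \tau^\delta).\]

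The next ingredient is the decomposition $\mathcal{I}(0) \cong R^+(\mathbf{1}) \oplus R^-(\mathbf{1})$ of the degenerate principal series of $\Mp_8(F)$ at $s = 0$, i.e. the analogue of \cite[Proposition 7.2]{gan2014formal} for the dual pairs $(\Mp_8(F), \SO(V_5^\pm))$. Since $\tau^\delta$ is tempered, the higher-dimensional version of Proposition \ref{casselman} sketched there for general $\Mp_{2n}(E)$ guarantees that $\tau^\delta$ does not occur on the boundary of $\mathcal{I}(0)$. Combined with the description of the open double coset in $\tilde{P}\backslash\Mp_8(F)/\Mp_4(E)$, whose stabilizer in $\Mp_4(E)$ is $\Sp_4(F)$ embedded via $\mathrm{Ad}(g_\delta)\circ i$, one obtains
\[\dim \Hom_{\Mp_4(E)}(\mathcal{I}(0), \tau^\delta) = \dim \Hom_{\Mp_4(E)}(\mathcal{I}_1(0), \tau^\delta) = \dim \Hom_{\Sp_4(F)}(\tau, \mathbb{C}),\]
where the $g_\delta$-twist identity of \S\ref{sect.splitting} turns $\Hom_{\Sp_4(F)}(\mathbb{C}, \tau^\delta)$ into $\Hom_{\Sp_4(F)}(\tau, \mathbb{C})$. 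Together with the see-saw identity above, this establishes (i). For (ii), the Gan-Savin bijection identifies $\phi_\pi = \phi_{\tau^\delta} = \phi_\tau \otimes \chi_\delta$, so the sum in (i) is nonzero iff $\pi$ is distinguished by at least one of the two pure inner forms of $\PGSp_4$ over $F$; by the Prasad conjecture for tempered representations of $\PGSp_4$ established in \cite{lu2018GSp(4)}, this occurs iff $\phi_\pi$ is conjugate-orthogonal.

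The main obstacle will be verifying the decomposition $\mathcal{I}(0) \cong R^+(\mathbf{1}) \oplus R^-(\mathbf{1})$ at the reducibility point $s = 0$ of the degenerate principal series of $\Mp_8(F)$: the argument of \cite[Proposition 7.2]{gan2014formal} is stated for $\Oo_{2n+1}$, so one must check that the passage to $\SO_{2n+1}$ (flagged in the remark following the proof of Theorem \ref{mainthm}) and to the dual pairs $(\Mp_8(F), \SO(V_5^\pm))$ does not introduce reducibility beyond the two expected summands. A secondary subtlety is confirming that the central-character hypothesis $\omega_\tau(-1,1) = \epsilon(1/2,\pi)/\gamma(-1,\psi)$ matches the Gan-Savin component-group character $\eta_{\tau^\delta}$ that selects lifts to the split $\SO(V_{5,E}) = \PGSp_4(E)$ rather than to the non-split five-dimensional form over $E$.
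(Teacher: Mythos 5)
Your proposal follows the same strategy as the paper's own proof: the see-saw diagram pairing $\Mp_4(E)\subset\Mp_8(F)$ with $\SO(V_5^\pm)\subset\SO(V_{5,E})$, the decomposition $\mathcal{I}(0)\cong R^{n+1,n}(\mathbf{1})\oplus R^{n+2,n-1}(\mathbf{1})$ from \cite[Proposition 7.2]{gan2014formal}, Proposition \ref{casselman} for the boundary vanishing, the $g_\delta$-twist from \S\ref{sect.splitting}, and the known $\PGSp_4$-distinction results for part (ii). The "obstacles" you flag at the end (the $\Oo_{2n+1}$ versus $\SO_{2n+1}$ passage and the central-character normalization) are exactly the points the paper addresses in the remark following Theorem \ref{mainthm} and in the Gan--Savin recipe, so they are not gaps.
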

\begin{proof}
	Here we give a general result for the pair $(\Mp_{2n}(E),\Sp_{2n}(F))$ and a tempered representation $\pi.$
	\par Recall that $V_{2n+1}^+$(resp. $V_{2n+1}^-$) is the $(2n+1)$-dimensional split (resp. non-split) quadratic space of trivial discriminant. Set $V_{2n+1,E}=V_{2n+1}^+\otimes_FE$. Assume that $\mathcal{I}(s)$ is the degenerate principal series representation of $\Mp_{4n}(F)$.
	Due to the following diagram 
	\begin{equation}\label{SO}
	\xymatrix{ \SO(V_{2n+1,E})\ar@{-}[d]\ar@{-}[rd] & \Mp_{4n}(F)\ar@{-}[d]\ar@{-}[ld]\ar@{-}[rd]& \SO(V_{2n+1,E})\ar@{-}[d] \\ \SO(V_{2n+1}^+) & \Mp_{2n}(E)\ar@{-}[ru] & \SO(V_{2n+1}^-) } \end{equation}
	 $\tau^\delta=\theta_{\psi}(\pi)$ is tempered due to Theorem\ref{1} and hence Proposition \ref{casselman} implies that $\tau^\delta$  does not occur on the boundary of $\mathcal{I}(0)$.   Then
	\[\dim \Hom_{\Sp_{2n}(F)}(\tau,\mathbb{C} )=\dim \Hom_{\Mp_{4n}(E)}(\mathcal{I}(0),\tau^\delta ). \]
	Thanks to \cite[Proposition 7.2]{gan2014formal}, the degenerate principal series $\mathcal{I}(0)$ is the direct sum 
	\begin{equation}\label{I(0)}
	\mathcal{I}(0)= R^{n+1,n}(\mathbf{1})\oplus R^{n+2,n-1}(\mathbf{1}) ,
	\end{equation}  where $R^{n+1,n}(\mathbf{1})$ (resp. $R^{n+2,n-1}(\mathbf{1})$) is the big theta lift
	of trivial representation from $\SO(V_{2n+1}^+)$ (resp. $\SO(V_{2n+1}^-)$) to $\Mp_{4n}(F)$. Then one can get
	\[\dim \Hom_{\Sp_{2n}(F)}(\tau,\mathbb{C} )=\dim \Hom_{\Mp_{2n}(E)}(R^{n+1,n}(\mathbf{1}),\tau^\delta )+\dim \Hom_{\Mp_{2n}(E)}(R^{n+2,n-1}(\mathbf{1}),\tau^\delta ) \]
	while the right hand side is equal to the sum $$\dim \Hom_{\SO(V_{2n+1}^+)}(\pi,\mathbb{C} )+\dim \Hom_{\SO(V_{2n+1}^-)}(\pi,\mathbb{C} )  $$  by the see-saw identities.
	Taking $n=2,$ we have $\SO(V_5^+)\cong \PGSp_4(F)$ and $\SO(V_5^-)\cong \PGSp_{1,1}(F)$. Then the desired identity \eqref{sumformforsp(4)} follows.
	\par
	The second part is the main result  in \cite[Theorem 4.2.18, Theorem 4.3.10]{lu2016}.
\end{proof}
  
\begin{rem}
	Let $\pi$ be a tempered representation  of $\SO(V_{2n+1,E})$ lying in an $L$-packet $\Pi_{\phi_{\pi}}$, where $\phi_{\pi}=\oplus_{i=1}^r m_i\phi_i$ and $\phi_i$ are irreducible. Then there is a Waldspurger's packet $Wd_{\psi}(\pi)$ of $\Mp_{2n}(E)$ associated to $\pi$ (see \cite{gan2012metaplectic}), which is given by
	\[Wd_{\psi}(\pi)=\{\tau\in\Irr(\Mp_{2n}(E) ) |\phi_\tau=\phi_{\pi} \}. \]
	 Given $\tau\in Wd_{\psi}(\pi)$ with an enhanced $L$-parameter $(\phi_{\tau},\eta_\tau)$, where $\eta_\tau$ is a character of $A_{\phi_{\tau}}\cong(\mathbb{Z}/2\mathbb{Z})^r$,  if $\eta_\tau(-1)=-\epsilon(1/2,\phi_{\tau})\epsilon(1/2,\phi_{\tau}\otimes\chi_{\delta})\langle-1,\delta\rangle^{1/2\sum_i\dim\phi_i}_E$, then \[\dim \Hom_{\Sp_{2n}(E) }(\tau,\mathbb{C} )=0. \]
\end{rem}

If $n=1$, it revisits the result in Theorem \ref{mainthm} that
\[\dim \Hom_{\SL_2(F)}(\tau,\mathbb{C} )=0\mbox{ if }\tau^\delta=\Theta_{\psi}(\pi^{D_E}), \]
without referring to the $\PGSp_4$-distinction problems over a quadratic field extension $E/F$ in \cite{lu2016}. 

\subsection{Relation with the Prasad conjecture}\label{subsect.prasad}
In order to introduce the Prasad conjecture, we need some recipes. Let $G$ be a quasi-split group defined over $F$.
Let $\rho$ be an irreducible representation of $G(F)$, i.e. $\rho\in \Irr(G(F))$. 
Assume the Langlands-Vogan conjectures
\cite[\S 9]{gan2011symplectic} for $G(F)$. 
Given a representation $\rho\in \Irr(G(F))$ with an enhanced $L$-parameter $$(\phi_\rho,\lambda),$$  where 
$\phi_\rho:WD_F\longrightarrow {}^LG=\hat{G}\rtimes W_F$ is a Langlands parameter  and   $\lambda:A_{\phi_{\rho}}\longrightarrow\mathbb{C}^\times$ is a character,
then $\phi_{\rho}|_{WD_E}$ is a Langlands parameter of $G(E)$.
Let $\phi_{\pi}:WD_E\longrightarrow \hat{G}\rtimes W_E$ be the Langlands parameter of $\pi$.
If $\phi_{\pi}=\phi_{\rho}|_{WD_E}$, then $\phi_{\rho}$ is called the parameter lift of $\phi_{\pi}$.
\par
Let $G^{op}$ be a quasi-split group over $F$ defined in \cite[\S9]{prasad2015arelative} satisfying $G^{op}(E)=G(E)$. Now we can give the statement of the Prasad conjecture, i.e. \cite[Conjecture 2]{prasad2015arelative}.
\begin{conj}[The Prasad conjecture]\label{prasadconjecture}
	Let $\pi$ be an irreducible admissible $G(F)$-distinguished representation of $G(E)$ with an enhanced $L$-parameter $(\phi_{\pi},\lambda)$,
	where $\pi$ lies in a generic L-packet $\Pi_{\phi_{\pi}}$ and $\lambda$ is a character of the component group $A_{\phi_{\pi}}$. Then
	\begin{equation}\label{prasadconj.}
	\sum_{\alpha}\dim\Hom_{G_\alpha}(\pi,\chi_G)=\sum_{i}m(\lambda,\tilde{\phi}_i)\deg\Phi(\tilde{\phi}_i)/d_0(\tilde{\phi}_i) 
	\end{equation}
	where
	\begin{itemize}
		\item $\alpha\in H^1(W_F,G)$ runs over all pure inner forms of $G$ satisfying $G_\alpha(E)=G(E);$
		\item $\tilde{\phi}_i\in \Hom(WD_F,{}^LG^{op})$ runs over all parameters of $^LG^{op}$ satisfying $\tilde{\phi}_i|_{WD_E}=\phi_\pi;$
		\item $m(\lambda,\tilde{\phi})=\dim\Hom_{A_{\tilde{\phi}}}(\mathbf{1},\lambda) $ is the multiplicity of the trivial representation contained in the restricted representation $\lambda|_{A_{\tilde{\phi}}}$;
		\item $\Phi:\Hom(WD_F,{}^LG^{op})\longrightarrow\Hom(WD_E,{}^LG^{op})$ is the base change map and $\deg\Phi$ is the degree;
		\item $d_0(\tilde{\phi})=|Coker\{A_{\tilde{\phi}}\longrightarrow A_{\phi_\pi}^{\Gal(E/F)} \}|$ is the size of the coker.
	\end{itemize}
\end{conj}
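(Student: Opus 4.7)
The plan is to verify Conjecture \ref{prasadconjecture} for $G=\GSpin_{2n+1}$ (equivalently, for $\SO(V_{2n+1}^\pm)$ once one tracks central characters carefully), using the metaplectic distinction machinery developed in the earlier sections. The driving observation is that, under the Gan--Savin bijection of Theorem \ref{1}, the sum over pure inner forms on the left-hand side of \eqref{prasadconj.} can be repackaged as a single metaplectic period on $\Mp_{2n}(E)$, which in turn is controlled by the see-saw and degenerate principal series analysis carried out in \S\ref{sect.proof}.

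Concretely, I would proceed in three steps. First, for a tempered parameter $\phi_\pi\colon WD_E\to\Sp_{2n}(\mathbb{C})$, I would invoke the general version of Proposition \ref{pgsp+pgu}, which gives the identity
\[ \dim\Hom_{\SO(V_{2n+1}^+)}(\pi,\mathbb{C}) + \dim\Hom_{\SO(V_{2n+1}^-)}(\pi,\mathbb{C}) = \dim\Hom_{\Sp_{2n}(F)}(\tau,\mathbb{C}), \]
where $\tau^\delta=\theta_\psi(\pi)$, so that the left-hand side of \eqref{prasadconj.} is reduced to a metaplectic multiplicity. Second, I would compute the right-hand side of \eqref{prasadconj.} directly: enumerate the parameter lifts $\tilde{\phi}_i$ of $\phi_\pi$ to $WD_F$ landing in $\,{}^LG^{op}$ (these are the conjugate-self-dual extensions of the appropriate parity), compute $\deg\Phi(\tilde{\phi}_i)$ through the centralizer structure in $\Sp_{2n}(\mathbb{C})$, and compute $d_0(\tilde{\phi}_i)$ from the induced map on component groups $A_{\tilde\phi_i}\to A_{\phi_\pi}^{\Gal(E/F)}$. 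Third, I would match the two expressions by showing that the parameter-counting on the Langlands side reproduces exactly the local epsilon factor criterion from Theorem \ref{mainthm} (and its higher-rank analogue), specifically the condition $\eta_\tau(-1)=\epsilon(1/2,\phi_\tau)\epsilon(1/2,\phi_\tau\otimes\chi_\delta)\langle-1,\delta\rangle_E$.

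The hard part will be making the combinatorial bookkeeping of the component groups and the signs coming from local root numbers line up precisely. On the $\Mp_{2n}(E)$ side, the distinction criterion involves $\eta_\tau(-1)$ and the product $\prod_i \epsilon(1/2,\phi_i)\epsilon(1/2,\phi_i\otimes\chi_\delta)$ over the irreducible summands of $\phi_\tau$; on the Prasad side, the analogous data is encoded arithmetically through the base change map $\Phi$ and the restrictions $\lambda|_{A_{\tilde\phi_i}}$. Translating between these two descriptions requires a careful case analysis by the parity (conjugate-orthogonal vs. conjugate-symplectic) of each irreducible constituent $\phi_i$, and a delicate identification of the map $A_{\tilde\phi}\to A_{\phi_\pi}^{\Gal(E/F)}$ with the restriction map on characters of $\mathbb{Z}/2\mathbb{Z}$-summands.

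A further obstacle lies outside the tempered range: the non-tempered representations in the Saito--Kurokawa packets of \S\ref{sect.application} fall outside the scope of Proposition \ref{casselman} in its clean form, because the tempered representation $\tau$ may occur on the boundary of $\mathcal{I}(s_0)$ at the relevant degenerate point. In those cases the decomposition \eqref{I(0)} of the degenerate principal series still gives useful information, but one must track boundary contributions from the intermediate filtration steps $\mathcal{I}_{i+1}(s)/\mathcal{I}_i(s)$, and the Prasad conjecture itself may need to be interpreted with the Arthur parameter rather than the Langlands parameter. For this reason I would expect a complete verification only for generic $L$-packets in the first pass, with the nongeneric cases requiring separate input from Arthur's multiplicity formula.
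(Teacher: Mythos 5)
The statement you were asked to justify is labeled \emph{conjecture} in the paper, not theorem, and the paper offers no proof of it. Conjecture~\ref{prasadconjecture} is Prasad's \cite[Conjecture 2]{prasad2015arelative}, quoted here as an external input; the paper's last section only uses it to \emph{formulate} a further conjectural identity~\eqref{conjidentity} for $\dim\Hom_{\Sp_{2n}(F)}(\tau^\delta,\mathbb{C})$, and explicitly records that Prasad's conjecture is known only for $\PGL_2$ and for tempered representations of $\PGSp_4$ (via \cite{lu2016,lu2018GSp(4)}). So there is nothing in the paper to compare your argument against, and a reviewer reading your proposal as a ``proof'' of Conjecture~\ref{prasadconjecture} would object at the outset.

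Beyond the framing, there is a genuine logical gap: your plan is circular in precisely the way the paper is careful to avoid. Step~1 of your outline uses the see-saw identity of Proposition~\ref{pgsp+pgu} (in its general form from the proof) to rewrite the left-hand side of~\eqref{prasadconj.} as $\dim\Hom_{\Sp_{2n}(F)}(\tau,\mathbb{C})$. But that metaplectic multiplicity is itself unknown for $n>2$; it is exactly the quantity the paper's Conjecture~\eqref{conjidentity} is trying to predict \emph{assuming} Prasad's conjecture. Theorem~\ref{mainthm} computes it only for $n=1$, and Proposition~\ref{pgsp+pgu} for $n=2$ piggybacks on the $\PGSp_4$-distinction results of \cite{lu2016}, which in turn are where the Prasad conjecture is actually proved for $\PGSp_4$. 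You therefore do not have an independent handle on either side of~\eqref{prasadconj.}; what you have is a see-saw translation between two unknowns, valid in the tempered range by Proposition~\ref{casselman}. Your Step~3 (``match the two expressions'') is where the entire open content of the conjecture lives, and nothing in the paper supplies that match beyond low rank. Note also that Conjecture~\ref{prasadconjecture} is stated for arbitrary quasi-split $G$, whereas your argument addresses only $G=\GSpin_{2n+1}$; even a complete verification of the latter would not establish the statement as written. Your caveats about non-tempered parameters and boundary terms in $\mathcal{I}(s_0)$ are well-taken and agree with the paper's own caution, but they do not rescue the main argument.
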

\begin{rem}
	If $\pi$ is a discrete series representation, then there is a formula for each individual dimension $\dim\Hom_{G_\alpha}(\pi,\chi_G)$. (See \cite[\S3.1]{lu2016}. )
\end{rem}
Here we  consider the Prasad conjecture  \cite{prasad2015arelative}
for the general spin group $G=\GSpin_{2n+1}$.
\par
Let $G=\GSpin_{2n+1}$. The  center $Z_G\cong\GL_1$ and the quotient group $G/Z_G$ is isomorphic to the special orthogonal group $\SO_{2n+1}.$ If $\Gal(E/F)=\langle\sigma\rangle,$ then $$G^{op}(F)=\{g\in G(E)|\sigma(g)=\lambda(g)^{-1}g \}$$
 and the quadratic character $\chi_G$ is  the character $\omega_{E/F}$ associated with the extension $E/F$ by Class Field Theory. 
\par Recall $V_{2n+1,E}=V_{2n+1}^+\otimes_FE$.
 Given a square-integrable representation $\pi$ of $\SO(V_{2n+1,E}),$ there is a representation $\Pi$ of $\GSpin(V_{2n+1,E})$ with trivial central character associated to $\pi$. Then
  we have
  \begin{equation}
  \begin{split}
  \dim \Hom_{\SO(V^+_{2n+1}) }(\pi,\mathbb{C} )&=\dim\Hom_{\GSpin(V^+_{2n+1})}(\Pi,\mathbb{C}) \\
  &=\dim\Hom_{\GSpin(V^+_{2n+1}) }(\Pi\otimes\chi_E,\omega_{E/F} ), 
  \end{split}
  \end{equation}
where $\chi_E$ is a character of $E^\times$ such that $\chi_E|_{F^\times}=\omega_{E/F},$ and the right hand side is related to the number of inequivalent lifts
of the Langlands parameter $\phi_{\Pi\otimes\chi_E }$ by  Conjecture \ref{prasadconjecture}.
\begin{conj} Let $E/F$ be a quadratic extension of  nonarchimedean local fields.  Suppose $\pi\in \Irr(\SO(V_{2n+1,E}) )$ is a square-integrable $\SO(V_{2n+1}^+)$-distinguished representation with an $L$-parameter $(\phi_{\pi},\eta),$ which determines a square-integrable $\GSpin(V_{2n+1}^+)$-distinguished representation $\Pi$ of $\GSpin(V_{2n+1,E})$ with trivial central character with associated $L$-parameter $(\phi_{\Pi},\lambda)$.
	Assume  that there exists a parameter $$\tilde{\phi}:WD_F\longrightarrow {}^LG^{op}$$ with component group $A_{\tilde{\phi}}$ such that $\tilde{\phi}|_{WD_E}=\phi_{\Pi\otimes\chi_E }$.
	Then for the square-integrable representation $\tau=\theta_{\psi}(\pi)$ of $\Mp_{2n}(E)$, 
	 one has
	\begin{equation}\label{conjidentity}
	\dim\Hom_{\Sp_{2n}(F)}(\tau^\delta,\mathbb{C} )=|\Irr(A_{\tilde{\phi}})|
	\end{equation}
	where $|\mathrm{Irr}(A_{\tilde{\phi}})	|$ denotes the number of irreducible representations of the finite group $A_{\tilde{\phi}}$.
\end{conj}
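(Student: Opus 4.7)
The plan is to transport the distinction multiplicity of $\tau^\delta$ under $\Sp_{2n}(F)$ into a total Prasad multiplicity on $\GSpin_{2n+1}$, and then to count parameter lifts using Conjecture \ref{prasadconjecture}. Concretely, the proof splits into three steps.

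First, by the general-$n$ argument contained in the proof of Proposition \ref{pgsp+pgu} (built on the see-saw diagram \eqref{SO}, the decomposition \eqref{I(0)} of the degenerate principal series $\mathcal{I}(0)$ of $\Mp_{4n}(F)$, and Proposition \ref{casselman}), applied to the square-integrable representation $\tau^{\delta^{-1}}$ of $\Mp_{2n}(E)$ whose $\delta$-twist is $\tau=\theta_\psi(\pi)$, one obtains
\[
\dim\Hom_{\Sp_{2n}(F)}(\tau^{\delta^{-1}},\mathbb{C})\;=\;\dim\Hom_{\SO(V_{2n+1}^+)}(\pi,\mathbb{C})+\dim\Hom_{\SO(V_{2n+1}^-)}(\pi,\mathbb{C}),
\]
where square-integrability of $\tau^{\delta^{-1}}$ is inherited from that of $\pi$ via Theorem \ref{1}. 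Because $g_\delta^2=\mathrm{diag}(\delta^2 I_n,I_n)$ lies in $\GSp_{2n}(F)$, the Lemma at the end of Section \ref{sect.splitting} shows that the left-hand side equals $\dim\Hom_{\Sp_{2n}(F)}(\tau^\delta,\mathbb{C})$, the quantity of interest.

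Second, since $\Pi$ is the lift of $\pi$ to $\GSpin(V_{2n+1,E})$ with trivial central character and $\chi_E$ is a character of $E^\times$ with $\chi_E|_{F^\times}=\omega_{E/F}=\chi_G$, the standard twist yields
\[
\dim\Hom_{\SO(V_{2n+1}^\pm)}(\pi,\mathbb{C})\;=\;\dim\Hom_{\GSpin(V_{2n+1}^\pm)}(\Pi\otimes\chi_E,\omega_{E/F}),
\]
so summing over both signs recasts our target as the total Prasad multiplicity of $\Pi\otimes\chi_E$ over the two pure inner forms of $G=\GSpin_{2n+1}$. Now invoke Conjecture \ref{prasadconjecture} for $G=\GSpin_{2n+1}$ applied to $\Pi\otimes\chi_E$: this converts the sum into
\[
\sum_i m(\lambda,\tilde\phi_i)\,\deg\Phi(\tilde\phi_i)/d_0(\tilde\phi_i),
\]
with $\tilde\phi_i:WD_F\to{}^LG^{op}$ running over parameter lifts of $\phi_{\Pi\otimes\chi_E}$. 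Under the existence hypothesis on $\tilde\phi$, the set of lifts should be a torsor under $\mathrm{Hom}(\Gal(E/F),Z(\hat G))$, and a bookkeeping argument with the exact sequence $A_{\tilde\phi_i}\to A_{\phi_{\Pi\otimes\chi_E}}^{\Gal(E/F)}\to\mathrm{Coker}$ should collapse the factors $\deg\Phi(\tilde\phi_i)/d_0(\tilde\phi_i)$ together with the multiplicities $m(\lambda,\tilde\phi_i)$ into the cardinality $|A_{\tilde\phi}|=|\Irr(A_{\tilde\phi})|$ of this finite abelian $2$-group.

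The main obstacle is precisely this last piece of finite-group arithmetic in the third step: one needs an explicit description of $G^{op}$ for $\GSpin_{2n+1}$ following \cite[\S 9]{prasad2015arelative}, a complete enumeration of the lifts $\{\tilde\phi_i\}$ together with their component groups $A_{\tilde\phi_i}$, and a verification that the enhancement $\lambda$ of $\Pi\otimes\chi_E$ pairs with these lifts in the manner dictated by Conjecture \ref{prasadconjecture}. The entire argument is of course conditional on Prasad's conjecture in this generality, which is itself open; as a secondary concern, Conjecture \ref{prasadconjecture} is formulated for generic $L$-packets, so compatibility with the square-integrable setting considered here must be checked.
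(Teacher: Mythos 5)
Your proposal follows essentially the same route as the paper: steps one and two (reducing $\dim\Hom_{\Sp_{2n}(F)}(\tau^\delta,\mathbb{C})$ to the sum of $\SO(V_{2n+1}^\pm)$-distinction multiplicities of $\pi$ via Proposition \ref{casselman}, the decomposition \eqref{I(0)}, and the see-saw \eqref{SO}, then twisting by $\chi_E$ to recast the sum as $\GSpin(V_{2n+1}^\pm)$-distinction of $\Pi\otimes\chi_E$) are precisely the paper's chain of equalities following the conjecture. The third step is likewise left conditional in the paper, which states the needed count \eqref{thesumforinnerform} as a conjectural identity (noting only that for $\hat G=\GSp_{2n}(\mathbb{C})$ an irreducible parameter admits at most one lift to $^LG^{op}$), matching the gap you correctly flag as the unfinished finite-group bookkeeping.
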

There is a conjectural identity for the square-integrable $G(F)$-distinguished representation $\Pi$ of $G(E)$
\begin{equation}\label{thesumforinnerform}
\dim\Hom_{\GSpin(V_{2n+1}^+)} (\Pi\otimes\chi_E,\omega_{E/F} )+ \dim\Hom_{\GSpin(V_{2n+1}^-)}(\Pi\otimes\chi_E,\omega_{E/F} )=|\Irr(A_{\tilde{\phi}})|
\end{equation}
where  $\tilde{\phi}|_{WD_E}=\phi_{\Pi\otimes\chi_E }=\phi_{\Pi}\otimes\chi_E$ is any lifted $L$-parameter of $G^{op}$. If \eqref{thesumforinnerform} holds, then the identity \eqref{conjidentity} holds due to the following
	\begin{equation*}
	\begin{split}
	\dim \Hom_{\Sp_{2n}(F) }(\tau^\delta,\mathbb{C} )&=\dim\Hom_{\GSpin(V^+_{2n+1}) }(\Pi\otimes\chi_E,\omega_{E/F} )+\dim\Hom_{\GSpin(V^-_{2n+1}) }(\Pi\otimes\chi_E,\omega_{E/F} )\\
	&=\dim\Hom_{\SO(V_{2n+1}^+)}(\pi,\mathbb{C})+\dim\Hom_{\SO(V_{2n+1}^-)}(\pi,\mathbb{C}) \\
	&=\dim\Hom_{\Mp_{2n}(E)}(R^{n+1,n}(\mathbf{1}),\tau )+\dim\Hom_{\Mp_{2n}(E)}(R^{n+2,n-1}(\mathbf{1}),\tau)\\
	&=\dim\Hom_{\Mp_{2n}(E)}(I(0),\tau), 
	\end{split}
	\end{equation*}
	where the last equality holds due to \eqref{I(0)}. 
	 \begin{rem}
		Raphael Beuzart-Plessis \cite{beuzart2017distinguished} proved that the multiplicity
		\[\dim\Hom_{G_\alpha(F)}(\pi,\chi_G) \]
		is independent of the choice of the inner form $G_\alpha$,
		where $\pi$ is a stable square-integrable representation of $G(E)$ and $G_\alpha$ is the inner form of $G$ satisfying $G_\alpha(E)=G(E)$. However, the conjectural identity \eqref{thesumforinnerform} involves arbitrary irreducible $G(F)$-distinguished representation which may not be stable. 
	\end{rem}
In fact, Conjecture \ref{prasadconjecture} has been proved for 
	 $\PGL_2$ and
	 $\PGSp_4$ if $\pi$ is a tempered representation of $\PGSp_4(E)$ in  \cite{lu2016,lu2018GSp(4)}.
	 
  Proposition \ref{pgsp+pgu} holds for   a tempered representation $\tau$ of $\Mp_4(E)$. However, we do not know  whether \eqref{sumformforsp(4)} holds or not if $\tau$ is a non-tempered representation of $\Mp_4(E)$.

Notice that $\widehat{\GSpin_{2n+1}}=\GSp_{2n}(\mathbb{C}).$ Given an irreducible parameter $$\phi_{\Pi\otimes\chi_E}:WD_E\longrightarrow \GSp_{2n}(\mathbb{C}),$$ there exists at most one lift
\[\phi:WD_F\longrightarrow \GSp_{2n}(\mathbb{C})\rtimes\sigma \]
such that $\phi|_{WD_E}=\phi_{\Pi}\otimes\chi_E,$ where the action of $\sigma$ on $\GSp_{2n}(\mathbb{C})$ is given by
\[\sigma(g)=\lambda(g)^{-1}g \]
for $g\in\GSp_{2n}(\mathbb{C})$.
Then \eqref{thesumforinnerform} implies that
 $$\dim\Hom_{\Sp_{2n}(F) }(\tau^\delta,\mathbb{C} )=\big|\mathrm{Irr}(A_\phi) \big|$$ if $\Pi$ is a square-integrable $\GSpin_{2n+1}(F)$-distinguished representation of $\GSpin_{2n+1}(E)$ with trivial central character.

It is believable that the pair $(\Mp_{2n}(E),\Sp_{2n}(F))$ is not a Gelfand Pair, i.e. for arbitrary $n$, there exists a representation $\tau\in \Irr(\Mp_{2n}(E))$ such that 
\[\dim\Hom_{\Sp_{2n}(F)}(\tau,\mathbb{C})>1.  \]
Indeed, we have the following.
\begin{coro}
	If $n$ is either $1$ or $2,$ then $(\Mp_{2n}(E),\Sp_{2n}(F))$ is not  a Gelfand pair.
\end{coro}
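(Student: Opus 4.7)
The plan is to exhibit, in each case $n \in \{1,2\}$, an irreducible genuine representation $\tau \in \Irr(\Mp_{2n}(E))$ realizing multiplicity $\dim\Hom_{\Sp_{2n}(F)}(\tau,\mathbb{C}) \geq 2$, which immediately prevents $(\Mp_{2n}(E),\Sp_{2n}(F))$ from being a Gelfand pair.

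For $n=1$, I would appeal directly to Theorem \ref{mainthm}. The quickest route is part (ii): picking any character $\chi_F$ of $F^\times$ and setting $\mu = \chi_F \circ N_{E/F}$, subject to the mild genericity $\mu^2 \neq |-|_E^{\pm 1}$, automatically yields $\mu|_{E^1}=\mathbf{1}$, so Theorem \ref{mainthm}(ii) gives $\dim\Hom_{\SL_2(F)}(\pi_\psi(\mu\cdot\chi_\delta),\mathbb{C}) = 2$ for the irreducible principal series $\pi_\psi(\mu\cdot\chi_\delta)$ of $\Mp_2(E)$. A square-integrable example is equally accessible via Theorem \ref{mainthm}(i): taking the base change to $E$ of any discrete series of $\PGL_2(F)$ produces a square-integrable $\pi \in \Irr(\PGL_2(E))$ with conjugate-orthogonal parameter that is $\PGL_2(F)$-distinguished, so the matching $\tau \in \Irr(\Mp_2(E))$ realizes multiplicity $2$.

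For $n=2$, I would invoke Proposition \ref{pgsp+pgu}, which for $\tau^\delta = \theta_\psi(\pi)$ with $\pi \in \Irr(\PGSp_4(E))$ tempered reads
\begin{equation*}
\dim\Hom_{\Sp_4(F)}(\tau,\mathbb{C}) \;=\; \dim\Hom_{\PGSp_4(F)}(\pi,\mathbb{C}) + \dim\Hom_{\PGSp_{1,1}(F)}(\pi,\mathbb{C}).
\end{equation*}
It therefore suffices to exhibit a single tempered $\pi$ distinguished by \emph{both} pure inner forms simultaneously. Beuzart-Plessis's theorem, cited in the remark following \eqref{thesumforinnerform}, ensures that any stable tempered discrete series of $\PGSp_4(E)$ distinguished by one pure inner form of $\PGSp_4$ is distinguished with the same multiplicity by the other; hence any such stable $\pi$ which is $\PGSp_4(F)$-distinguished forces both summands above to be nonzero, so their sum is at least $2$.

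The hard part will be the existence claim in the $n=2$ step: one must guarantee that some stable tempered discrete series $\pi$ of $\PGSp_4(E)$ is $\PGSp_4(F)$-distinguished. I would achieve this by taking a stable base change from a suitable discrete series of $\PGSp_4(F)$, whose distinction is controlled by the Prasad conjecture for $\PGSp_4$ established in \cite{lu2018GSp(4)}; equivalently, the multiplicity formula proved there provides explicit tempered parameters on $\PGSp_4(E)$ for which the right-hand side of the displayed identity is positive, yielding the desired $\tau$ on $\Mp_4(E)$.
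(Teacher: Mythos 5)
Your proof is correct and follows the same route the paper intends: the paper's own proof is the single line ``it follows directly from Theorem~\ref{mainthm} and Proposition~\ref{pgsp+pgu},'' and you supply exactly the missing details. The $n=1$ case is airtight via Theorem~\ref{mainthm}(ii) with $\mu=\chi_F\circ N_{E/F}$ (your alternative via part~(i) also works, though note that the base change of a discrete series of $\PGL_2(F)$ can degenerate to a principal series if its parameter becomes reducible on $WD_E$, so one must pick the discrete series so that this does not happen; the principal series example avoids this entirely and is the cleaner choice).

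For $n=2$, your reduction via Proposition~\ref{pgsp+pgu}(i) to making both summands $\dim\Hom_{\PGSp_4(F)}(\pi,\mathbb{C})$ and $\dim\Hom_{\PGSp_{1,1}(F)}(\pi,\mathbb{C})$ simultaneously nonzero is the right move. Two small caveats on the Beuzart-Plessis step. First, the remark in the paper states his result in the form $\dim\Hom_{G_\alpha(F)}(\pi,\chi_G)$ with the Prasad character $\chi_G$ built in; you are invoking it for trivial-character distinction of $\SO_5\cong\PGSp_4$, so one should check (or note) that $\chi_G$ is trivial here, or appeal to the version of his theorem for the relevant character. Second, and more to the point, a shorter route is available that avoids Beuzart-Plessis entirely: the individual multiplicities on $\PGSp_4(F)$ and $\PGSp_{1,1}(F)$ for tempered $\pi$ are already computed in \cite[Theorems 4.2.18 and 4.3.10]{lu2016} --- precisely the references cited in the proof of Proposition~\ref{pgsp+pgu}(ii) --- and those formulas directly exhibit conjugate-orthogonal stable discrete parameters for which each summand equals $1$. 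Either way, the existence claim is standard and your argument goes through; the overall structure matches the paper's.
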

\begin{proof}
	It follows directly from Theorem \ref{mainthm} and Proposition \ref{pgsp+pgu}.
\end{proof}
\bibliographystyle{abbrv}
\bibliography{Mp(2)}
\end{document}